\documentclass{amsart}
\usepackage[pdfencoding=auto,hidelinks]{hyperref}
\usepackage{bm}
\usepackage{mathtools}
\newtheorem{thm}{Theorem}[section]
\newtheorem{lem}[thm]{Lemma}
\newtheorem{prop}[thm]{Proposition}
\newtheorem{cor}[thm]{Corollary}
\theoremstyle{definition}
\newtheorem{defn}[thm]{Definition}
\newtheorem{ex}[thm]{Example}
\theoremstyle{remark}
\newtheorem{rem}[thm]{Remark}
\DeclareMathOperator{\Li}{Li}
\DeclareMathOperator{\spf}{sf}
\newcommand{\Q}{\mathbb{Q}}
\newcommand{\R}{\mathbb{R}}
\newcommand{\Z}{\mathbb{Z}}
\newcommand{\lau}{\mathbb{Q}(\!(z)\!)}
\newcommand{\lauseq}{\lau^{<\omega}}
\title[A determinant formula for multiple harmonic sums]{A determinant formula for multiple harmonic sums and their linear independence}
\author{Minoru Hirose}
\address{Graduate School of Science and Engineering, Kagoshima University, 1-21-35 Korimoto, Kagoshima, Kagoshima 890-0065, Japan}
\email{hirose@sci.kagoshima-u.ac.jp}
\author{Hideki Murahara}
\address{Department of Economics, The University of Kitakyushu, 4-2-1 Kitagata, Kokuraminami-ku, Kitakyushu, Fukuoka, 802-8577, Japan}
\email{hmurahara@mathformula.page}
\author{Shingo Saito}
\address{Faculty of Arts and Science, Kyushu University, 744, Motooka, Nishi-ku, Fukuoka, 819-0395, Japan}
\email{ssaito@artsci.kyushu-u.ac.jp}
\subjclass[2020]{Primary 11M32; Secondary 15A15}
\keywords{Multiple harmonic sum; Determinant}
\begin{document}
\begin{abstract}
 We explicitly evaluate a family of determinants whose entries are multiple harmonic sums.
 As a consequence, we prove that certain finite sequences of multiple harmonic sums are linearly independent.
 This result strengthens the previously known linear independence of the corresponding infinite sequences
 and stands in contrast to the abundance of linear relations among multiple zeta values.
\end{abstract}
\maketitle

\section{Introduction}
An \emph{index} is a finite (possibly empty) sequence of positive integers.
For each index $\bm{k}=(k_1,\dots,k_r)$ and each $n\ge1$, we define the \emph{multiple harmonic sum} $\zeta_{<n}(\bm{k})$ by
\[
 \zeta_{<n}(\bm{k})=\sum_{1\le n_1<\dots<n_r<n}\frac{1}{n_1^{k_1}\dotsm n_r^{k_r}}\in\Q,
\]
where we understand that $\zeta_{<n}(\emptyset)=1$ for all $n$ and that $\zeta_{<n}(\bm{k})=0$ whenever $n\le r$.

The aim of this paper is to evaluate a determinant whose entries are multiple harmonic sums.
To state the main results, we introduce the following notation.
For each index $\bm{k}=(k_1,\dots,k_r)$,
we follow \cite{S} in writing $\bm{k}^{\uparrow}=(k_1,\dots,k_{r-1},k_r+1)$ and $\bm{k}^{\rightarrow}=(k_1,\dots,k_r,1)$,
where $\emptyset^{\uparrow}=\emptyset$ and $\emptyset^{\rightarrow}=(1)$.
There is a unique enumeration $\bm{k}_0,\bm{k}_1,\dotsc$ of all indices such that $\bm{k}_{2i}=\bm{k}_i^{\uparrow}$ and $\bm{k}_{2i+1}=\bm{k}_i^{\rightarrow}$
for all $i\ge0$.

\begin{ex}
 The first few indices in the enumeration are
 $\bm{k}_0=\emptyset$, $\bm{k}_1=(1)$, $\bm{k}_2=(2)$, $\bm{k}_3=(1,1)$, $\bm{k}_4=(3)$, $\bm{k}_5=(2,1)$, $\bm{k}_6=(1,2)$,
 and $\bm{k}_7=(1,1,1)$.
\end{ex}

Note that for each $w\ge1$, the indices $\bm{k}_0,\dots,\bm{k}_{2^w-1}$ are precisely the indices $\bm{k}=(k_1,\dots,k_r)$ with $k_1+\dots+k_r\le w$.

We are now ready to state our main theorem:
\begin{thm}[Main Theorem]\label{thm:<}
 Let $w\ge1$ and $N\ge0$.
 Then we have
 \begin{align*}
  &\begin{vmatrix}\zeta_{<N+1}(\bm{k}_0)&\cdots&\zeta_{<N+1}(\bm{k}_{2^w-1})\\\vdots&\ddots&\vdots\\\zeta_{<N+2^w}(\bm{k}_0)&\cdots&\zeta_{<N+2^w}(\bm{k}_{2^w-1})\end{vmatrix}\\
  &=(-1)^{1-\delta_{w,1}}\prod_{j=0}^{w-1}\prod_{i=\lfloor2^{j-1}\rfloor}^{2^j-1}\frac{(i!)^{2^{w-j+2}-2(w-j+2)}((N+i)!)^{w-j}}{((N+2^w-1-i)!)^{w-j}},
 \end{align*}
 where $\delta_{w,1}$ denotes the Kronecker delta.
 In particular, setting $N=0$, we have
 \[
  \begin{vmatrix}\zeta_{<1}(\bm{k}_0)&\cdots&\zeta_{<1}(\bm{k}_{2^w-1})\\\vdots&\ddots&\vdots\\\zeta_{<2^w}(\bm{k}_0)&\cdots&\zeta_{<2^w}(\bm{k}_{2^w-1})\end{vmatrix}
  =(-1)^{1-\delta_{w,1}}\prod_{j=0}^{w-1}\prod_{i=\lfloor2^{j-1}\rfloor}^{2^j-1}\frac{(i!)^{2^{w-j+2}-w+j-4}}{((2^w-1-i)!)^{w-j}}.
 \]
\end{thm}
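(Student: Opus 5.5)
We argue by induction on $w$, using row operations that turn the $2^w\times2^w$ determinant into a product of two determinants of size $2^{w-1}$ with the same structure. Write $D_w(N)$ for the determinant. The basic identities are
\[
 \zeta_{<n+1}(\bm{k}^{\uparrow})-\zeta_{<n}(\bm{k}^{\uparrow})=\frac{1}{n}\bigl(\zeta_{<n+1}(\bm{k})-\zeta_{<n}(\bm{k})\bigr)
\]
for nonempty $\bm{k}$, and
\[
 \zeta_{<n+1}(\bm{k}^{\rightarrow})-\zeta_{<n}(\bm{k}^{\rightarrow})=\frac{1}{n}\zeta_{<n}(\bm{k}).
\]
So taking the difference operator $\Delta f(n)=f(n+1)-f(n)$ of a column indexed by $\bm{k}_{2i}$ or $\bm{k}_{2i+1}$ gives $1/n$ times an expression in the column of $\bm{k}_i$.

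First, replace rows $2,\dots,2^w$ by successive differences of consecutive rows. The first row becomes $(1,\zeta_{<N+1}(\bm{k}_1),\dots)$, and the remaining $2^w-1$ rows consist of $\Delta$-values. I would rather reorganize the columns as a pair of blocks: the column $\bm{k}_{2i+1}$ gives $\zeta_{<n}(\bm{k}_i)/n$, and the column $\bm{k}_{2i}$ gives $\Delta\zeta_{<n}(\bm{k}_i)/n$. After multiplying row $n$ by $n$ (this produces the factors $(N+i)!$), the columns for $\bm{k}_{2i}$ become $\zeta_{<n+1}(\bm{k}_i)-\zeta_{<n}(\bm{k}_i)$. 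Adding the corresponding $\rightarrow$-column then turns them into $\zeta_{<n+1}(\bm{k}_i)$.

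This yields a $2^w\times 2^w$ matrix whose columns are $\zeta_{<n}(\bm{k}_i)$ and $\zeta_{<n+1}(\bm{k}_i)$ for $0\le i<2^{w-1}$, evaluated along $2^w$ consecutive values of $n$. The column for $\bm{k}_0$ needs separate treatment, since $\emptyset^{\uparrow}=\emptyset$; this is the source of the sign and of the $\delta_{w,1}$. I would handle it by expanding along the first row.

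Such a "shifted pair" matrix is a generalized Casoratian. To evaluate it, I would prove a more general statement: $D_w(N)$ together with variants where the columns are shifted by varying amounts. The natural generalization is
\[
 E_w(N;a,b)=\det\bigl(\zeta_{<N+r}(\bm{k}_i)\ \text{shifted by } a \text{ or } b\bigr),
\]
and I would look for a recursion $E_w\to E_{w-1}$ that closes up. Alternatively, I would proceed via generating functions, computing $\zeta_{<n}$ as coefficients of $\Li$-type series in $\lau$, as the paper's macros \verb|\lau| and \verb|\Li| suggest.

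The main obstacle is finding the right generalized family, so that the row and column reductions recurse cleanly and the factorial exponents telescope to the stated ones. In particular, the exponents $2^{w-j+2}-2(w-j+2)$ must come out correctly, grouped by $j$ according to the dyadic blocks $i\in[\lfloor2^{j-1}\rfloor,2^j-1]$. I would first verify the formula for $w=1,2$ by hand. I would then guess the recursion $D_w(N)=c\cdot D_{w-1}(N)\,D_{w-1}(N+2^{w-1})\cdot(\text{factorial ratio})$, which matches the block structure of the product. Finally, I would confirm it by checking that the ratio of the right-hand sides equals the factorial factors produced in the reduction.
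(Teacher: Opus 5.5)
Your first reduction is correct. It is exactly the $m=1$ case of the paper's recursion. The steps are: difference the rows and expand along the $\bm{k}_0$ column, scale row $n$ by $n$, add each $\rightarrow$-column to the matching $\uparrow$-column, then difference once more to remove the constant column $\zeta_{<n}(\bm{k}_0)=1$. What remains, up to the factor $N!/(N+2^w-1)!$, a sign and column order, is the $(2^w-2)\times(2^w-2)$ determinant with columns $n\mapsto\zeta_{=n}(\bm{k}_i)$ and $n\mapsto\zeta_{=n-1}(\bm{k}_i)$ for $1\le i<2^{w-1}$. In the paper's notation this is $D_{N+1}(\bm{L}_{w-1,2})$, with columns $L_i$ and $zL_i$.

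From that point on, however, the proposal has no argument. None of your row operations splits the determinant into $D_{w-1}(N)\,D_{w-1}(N+2^{w-1})$ times a factor. That recursion is read off from the closed form, and checking that the right-hand sides satisfy it says nothing about the determinants, so the induction as planned is circular. Also, the $\bm{k}_0$ column contributes no sign: expanding along it after differencing gives a cofactor with sign $+1$. For $w=2$ the sign comes from the $2\times2$ Cauchy determinant $\frac{1}{(N+2)^2}-\frac{1}{(N+1)(N+3)}<0$.

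The missing idea is the right family, and a two-shift family $E_w(N;a,b)$ cannot close. Repeating your step on the columns $L_i,zL_i$ produces four consecutive shifts of each $\bm{k}_i$ with $i<2^{w-2}$, then eight, and so on. The paper uses $\bm{L}_{w,m}=((z^pL_i)_{0\le p\le m-1})_{1\le i\le 2^w-1}$, i.e.\ the columns $n\mapsto\zeta_{=n-p}(\bm{k}_i)$ for $0\le p<m$. It proves $D_N(\bm{L}_{w,m})=\pm(\text{superfactorials})\cdot D_{N+m}(\bm{L}_{w-1,2m})$, with $m$ doubling until $w=1$, where a Cauchy determinant finishes the computation.

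For $m\ge2$ this step is genuinely harder than your one-line identities. The $m$-fold differentiation itself is harmless: it is only row scaling (Lemma~\ref{lem:diff}). After it, the $n$-th coefficient of $(z^pL_{2i})^{(m)}$ is $\prod_{1\le j\le m,\,j\ne m-p}(n+j)\cdot\zeta_{=n+m-p}(\bm{k}_i)$. One must show that replacing this polynomial in $n$ by its value $(-1)^{m-p-1}p!(m-p-1)!$ at $n=-(m-p)$ changes the column only by a combination of the preceding columns and of negative powers of $z$, which $D_N$ ignores. The analogous statement gives $(z^pL_{2i+1})^{(m)}\equiv p!(m-p-1)!(1-z)^{-(m-p)}L_i$.

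This triangularity is Proposition~\ref{prop:Lpi(q)}, proved by induction along the binary enumeration using the subspaces $V_{p,i}^{(q)}$, and it is the heart of the paper. After it, Lemma~\ref{lem:1-z} removes the columns $(1-z)^{-m},\dots,(1-z)^{-1}$ coming from $L_{p,1}$. Multiplying by $z^m(1-z)^m$ and then applying column operations yields $\bm{L}_{w-1,2m}$. Your proposal has no counterpart of this step, so it does not get past the first reduction.
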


\begin{rem}
 The right-hand side of the first equation in Theorem~\ref{thm:<} simplifies to the reciprocal of a polynomial in $N$,
 because $N+i\le N+2^w-1-i$ for all $i$ in the product.
\end{rem}

\begin{ex}
 For $w=1,2,3,4$, the right-hand side of the first equation in Theorem~\ref{thm:<} is equal to $c_w/p_w(N)$, where
 \begin{align*}
  c_1&=1,\\
  p_1(N)&=N+1,\\
  c_2&=-1,\\
  p_2(N)&=(N+1)^2(N+2)^3(N+3)^2,\\
  c_3&=-144,\\
  p_3(N)&=(N+1)^3(N+2)^5(N+3)^6(N+4)^7(N+5)^6(N+6)^5(N+7)^3,\\
  c_4&=-46963559907481570246656000000,\\
  p_4(N)&=(N+1)^4(N+2)^7(N+3)^9(N+4)^{11}(N+5)^{12}\\
  &\hphantom{{}={}}\times(N+6)^{13}(N+7)^{14}(N+8)^{15}(N+9)^{14}(N+10)^{13}\\
  &\hphantom{{}={}}\times(N+11)^{12}(N+12)^{11}(N+13)^{9}(N+14)^{7}(N+15)^{4},
 \end{align*}
 and therefore the right-hand side of the second equation in Theorem~\ref{thm:<} is equal to
 \begin{align*}
  \frac{c_1}{p_1(0)}&=1,&
  \frac{c_2}{p_2(0)}&=-\frac{1}{2^{3}3^{2}}=-\frac{1}{72},\\
  \frac{c_3}{p_3(0)}&=-\frac{1}{2^{20}3^{9}5^{6}7^{3}},&
  \frac{c_4}{p_4(0)}&=-\frac{1}{2^{85}3^{45}5^{23}7^{19}11^{12}13^{9}}.
 \end{align*}
\end{ex}

Since the right-hand side of the first equation in Theorem~\ref{thm:<} is not zero,
Theorem~\ref{thm:<} implies the following corollary:
\begin{cor}
 Let $w\ge1$ and $N\ge0$.
 Then the finite sequences
 \[
  (\zeta_{<n}(\bm{k}_i))_{N+1\le n\le N+2^w}\in\Q^{2^w}
 \]
 for $i=0,\dots,2^w-1$ are linearly independent.
 In particular, the finite sequences
 \[
  (\zeta_{<n}(\bm{k}_i))_{1\le n\le2^w}\in\Q^{2^w}
 \]
 for $i=0,\dots,2^w-1$ are linearly independent.
\end{cor}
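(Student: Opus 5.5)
The statement immediately above is the Corollary, and it follows directly from Theorem~\ref{thm:<}. I will regard the $2^w$ finite sequences $(\zeta_{<n}(\bm{k}_i))_{N+1\le n\le N+2^w}$, for $i=0,\dots,2^w-1$, as the columns of the $2^w\times2^w$ matrix $M_{w,N}$ whose $(n,i)$ entry is $\zeta_{<N+n}(\bm{k}_{i})$. This is exactly the matrix whose determinant is evaluated in Theorem~\ref{thm:<}. The columns of a square matrix over the field $\Q$ are linearly independent if and only if the matrix is invertible, which happens if and only if its determinant is nonzero. So the plan is simply to check that the right-hand side of the first equation in Theorem~\ref{thm:<} is nonzero for every $w\ge1$ and $N\ge0$.

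The check is routine. The sign $(-1)^{1-\delta_{w,1}}$ is $\pm1$. Every other factor in the double product is a power of a factorial, namely $i!$, $(N+i)!$ or $(N+2^w-1-i)!$. Since $N\ge0$ and $0\le i\le 2^w-1$, each argument is a nonnegative integer, so every factorial is a positive integer. The exponents are integers, some of them possibly negative; in the form displayed they are $2^{w-j+2}-2(w-j+2)$ and $w-j\ge1$. An integer power, positive or negative, of a positive integer is a positive rational. Hence the product is a nonzero rational number, and $\det M_{w,N}\neq0$.

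It follows that $M_{w,N}$ is invertible, so its columns are linearly independent over $\Q$. The second assertion is the special case $N=0$, which corresponds to the second displayed formula in Theorem~\ref{thm:<}.

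There is no real obstacle here, since all the difficulty lies in Theorem~\ref{thm:<} itself. The only things to handle carefully are that the columns of $M_{w,N}$ are indeed the sequences in the statement, and that no factorial in the formula has a negative argument. The second point holds because $i\le 2^w-1$ gives $N+2^w-1-i\ge N\ge0$.
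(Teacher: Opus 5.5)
Your proposal is correct and matches the paper's argument: the corollary follows from Theorem~\ref{thm:<}, since the right-hand side of the determinant formula is a nonzero rational number, so the columns are linearly independent. Incidentally, the exponents $2^{w-j+2}-2(w-j+2)$ are in fact all positive because $w-j+2\ge3$, but as you note, nonvanishing holds regardless of their sign.
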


Consequently, the infinite sequences $(\zeta_{<n}(\bm{k}))_{n\ge1}\in\Q^{\Z_{\ge1}}$ for indices $\bm{k}$ are linearly independent.
As pointed out by Yamamoto~\cite[Theorem~3.1]{Y}, this also follows from the linear independence of multiple polylogarithms
(see Definition~\ref{defn:multiple_polylog}),
established in \cite[Theorem~4]{MPH}, \cite[Theorem~3]{U}, and \cite[Theorem~8.7]{W} using complex analysis.
The linear independence of these sequences stands in stark contrast to the fact that the \emph{multiple zeta values} defined by
\[
 \zeta(\bm{k})=\lim_{n\to\infty}\zeta_{<n}(\bm{k})=\sum_{1\le n_1<\dots<n_r}\frac{1}{n_1^{k_1}\dotsm n_r^{k_r}}\in\R
\]
for admissible (i.e.\ $k_r\ge2$ or $r=0$) indices $\bm{k}=(k_1,\dots,k_r)$
satisfy numerous linear relations, one of the simplest being $\zeta(3)=\zeta(1,2)$
(see, for example, \cite{K} for the basics of multiple zeta values).

\section{Proof of the main theorem}
In this section, we prove Theorem~\ref{thm:<}.

\subsection{Slight modification of multiple harmonic sums}
For the proof of Theorem~\ref{thm:<}, it is convenient to work with a slightly modified version of the multiple harmonic sums.
For each nonempty index $\bm{k}=(k_1,\dots,k_r)$ and each $n\ge1$, we define
\[
 \zeta_{=n}(\bm{k})=\sum_{1\le n_1<\dots<n_r=n}\frac{1}{n_1^{k_1}\dotsm n_r^{k_r}}\in\Q,
\]
where we understand that $\zeta_{=n}(\bm{k})=0$ whenever $n<r$.
Since $\zeta_{<n}(\bm{k}_0)=1$ for all $n$ and $\zeta_{<n+1}(\bm{k})-\zeta_{<n}(\bm{k})=\zeta_{=n}(\bm{k})$ for all nonempty indices $\bm{k}$ and all $n$,
elementary row operations reduce Theorem~\ref{thm:<} to the following proposition:
\begin{prop}\label{prop:=}
 Let $w\ge1$ and $N\ge0$.
 Then we have
 \begin{align*}
  &\begin{vmatrix}\zeta_{=N+1}(\bm{k}_1)&\cdots&\zeta_{=N+1}(\bm{k}_{2^w-1})\\\vdots&\ddots&\vdots\\\zeta_{=N+2^w-1}(\bm{k}_1)&\cdots&\zeta_{=N+2^w-1}(\bm{k}_{2^w-1})\end{vmatrix}\\
  &=(-1)^{1-\delta_{w,1}}\prod_{j=0}^{w-1}\prod_{i=\lfloor2^{j-1}\rfloor}^{2^j-1}\frac{(i!)^{2^{w-j+2}-2(w-j+2)}((N+i)!)^{w-j}}{((N+2^w-1-i)!)^{w-j}}.
 \end{align*}
\end{prop}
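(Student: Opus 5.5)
Let $M_w(N)$ denote the $(2^w-1)\times(2^w-1)$ matrix in Proposition~\ref{prop:=}, with rows indexed by $n=N+1,\dots,N+2^w-1$ and columns by $\bm{k}_1,\dots,\bm{k}_{2^w-1}$. I would prove the proposition by induction on $w$, following the binary structure $\bm{k}_{2i}=\bm{k}_i^{\uparrow}$, $\bm{k}_{2i+1}=\bm{k}_i^{\rightarrow}$. The two operations act on $\zeta_{=n}$ as follows:
\[
\zeta_{=n}(\bm{k}^{\uparrow})=\tfrac1n\zeta_{=n}(\bm{k}),\qquad \zeta_{=n}(\bm{k}^{\rightarrow})=\tfrac1n\sum_{m<n}\zeta_{=m}(\bm{k})\quad(\bm{k}\neq\emptyset),
\]
together with $\zeta_{=n}((1))=1/n$. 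Thus, after multiplying row $n$ by $n$ (a factor $\prod_n n=(N+2^w-1)!/N!$), the new columns are $\zeta_{=n}(\bm{k}_i)$ and the partial sums $\zeta_{<n}(\bm{k}_i)$ of the old columns. It is cleaner to phase this in terms of $\zeta_{<n}$: $\zeta_{<n+1}(\bm{k}_i^{\rightarrow})-\zeta_{<n}(\bm{k}_i^\rightarrow)=\frac1n\zeta_{<n}(\bm{k}_i)$ and similarly for $\uparrow$. So the columns at level $w$ arise from those at level $w-1$ by the two linear operators $f\mapsto(\frac1n f(n))$ and $f\mapsto(\frac1n\sum_{m<n}f(m))$, applied to sequences on a window of length roughly twice as large.

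The key step is to reduce the size-$2^w$ determinant to size-$2^{w-1}$ determinants at two shifts. For each old column $f=\zeta_{=\cdot}(\bm{k}_i)$, the pair of new columns $(\frac1n f(n),\ \frac1n F(n))$, where $F(n)=\sum_{m<n}f(m)$, can be replaced via row operations (differences of consecutive rows, which turn $F$ back into $f$) by structured blocks. Concretely, I would take the difference operator $\Delta$ on rows after scaling by $n$. This turns the $F$-columns into $f$-columns shifted by one, while the $f$-columns become $\Delta f$. Reordering the columns and rows into a block form, one gets a block-triangular matrix whose diagonal blocks are $M_{w-1}(N)$ and $M_{w-1}(N+2^{w-1})$, or a variant with a shift. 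Their product, combined with the scalar factors from $1/n$ and the Vandermonde-like contributions, should reproduce the product formula. Indeed the right-hand side factorizes with exponents $w-j$, suggesting each induction step contributes one extra factor of $(N+i)!/(N+2^w-1-i)!$ for each $i$.

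Since the block separation is exactly what may fail, the fallback is to generalize the statement before inducting. I would replace the rows $n$ by a general window and prove an $LU$-type factorization $M_w(N)=L\cdot D\cdot U$. Here $L$ is lower triangular in the rows, encoding the Newton/finite-difference basis in $n$: binomial coefficients $\binom{n-N-1}{a}$, with factorial denominators coming from $\Delta^a(1/n)=(-1)^a a!/(n(n+1)\cdots(n+a))$. $U$ is unitriangular in the columns after ordering them by a suitable "degree". The diagonal entries of $D$ would then be explicit ratios of factorials, namely the leading coefficients of $\Delta^{a}$ applied to the sequences $\zeta_{=n}(\bm{k})$. Products of $1/n(n+1)\cdots$ produce exactly the $(N+i)!/(N+2^w-1-i)!$ ratios, and the $i!$ powers come from the $a!$ factors.

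The main obstacle is identifying the correct triangular structure: which column pairs with which finite-difference order, so that $\Delta^{a}$ of a column vanishes to the right order at the start of the window. Multiple harmonic sums are not rational functions of $n$, so this is a genuine combinatorial-analytic claim. I would first compute $w=2,3$ explicitly to guess the pairing and the diagonal entries. After that, the sign $(-1)^{1-\delta_{w,1}}$ and the exponent bookkeeping $2^{w-j+2}-2(w-j+2)$ should be a routine telescoping check.
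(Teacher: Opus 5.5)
\textbf{There is a genuine gap: the proposal never proves the inductive step.}

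Your first reduction is correct. It is exactly the $m=1$ case of the paper's recursion. Multiplying row $n$ by $n$, differencing consecutive rows and expanding along the column of $\bm{k}_1=(1)$ gives, up to sign,
\[
\det M_w(N)=\frac{N!}{(N+2^w-1)!}\det R,
\]
where $R$ has rows $N+1\le n\le N+2^w-2$ and columns $\zeta_{=n}(\bm{k}_i)$, $\zeta_{=n+1}(\bm{k}_i)$ for $1\le i\le 2^{w-1}-1$.

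The gap is the next step. Split the rows of $R$ into the first and last $2^{w-1}-1$, and the columns into unshifted and shifted ones. The diagonal blocks are then indeed $M_{w-1}(N)$ and $M_{w-1}(N+2^{w-1})$, but neither off-diagonal block vanishes. So $\det R=\det M_{w-1}(N)\cdot\det\bigl(M_{w-1}(N+2^{w-1})-CM_{w-1}(N)^{-1}B\bigr)$ involves a Schur complement you have no handle on. Already for $w=2$, $R=\begin{pmatrix}1/(N+1)&1/(N+2)\\1/(N+2)&1/(N+3)\end{pmatrix}$ has determinant $1/((N+1)(N+2)^2(N+3))$, not $M_1(N)M_1(N+2)=1/((N+1)(N+3))$. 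You give no operations that produce the asserted block-triangular form or the ``Vandermonde-like contributions''. The fallback $LU$ factorization has the same hole, since, as you say yourself, the triangular structure is left unidentified. The underlying problem is that the family $M_w(N)$ is not stable under your reduction. What comes out is a matrix with two adjacent shifts of each level-$(w-1)$ column on one window, so an induction on $w$ alone cannot close.

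The missing idea is to generalize before inducting. For $m\ge1$, take the determinant whose columns are the $m$ consecutive shifts $n\mapsto\zeta_{=n-p}(\bm{k}_i)$, $0\le p\le m-1$, of each $\bm{k}_i$, $1\le i\le 2^w-1$. This is the paper's $D_N(\bm{L}_{w,m})$ with $L_{p,i}=z^pL_i$, $L_i=\Li_{\bm{k}_i}$, $L_0=1$. One then proves a recursion from $(w,m,N)$ to $(w-1,2m,N+m)$. Your step is the case $m=1$, and the base case $w=1$ is a Cauchy determinant.

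For $m\ge2$, multiplying by $n$ is not enough. One multiplies row $n$ by $n(n-1)\cdots(n-m+1)$, which is the $m$-th derivative of the generating functions. The real work is then a column-triangularity statement. Modulo the span of the preceding columns of $\bm{L}_{w,m}^{(m)}$ and of negative powers of $z$ (which the determinant does not see), $(z^pL_i)^{(m)}$ is congruent to
\begin{itemize}
\item $p!(m-p-1)!(1-z)^{-(m-p)}L_{(i-1)/2}$ for odd $i$;
\item $(-1)^{m-p-1}p!(m-p-1)!z^{-(m-p)}L_{i/2}$ for even $i$.
\end{itemize}

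With this in hand the recursion goes through as follows.
\begin{itemize}
\item The $m$ columns with $i=1$ become $(1-z)^{-m},\dots,(1-z)^{-1}$; these are the analogue of your constant column.
\item Eliminating them multiplies all other columns by $(1-z)^m$ and shifts $N$ by $m$.
\item After a further shift by $m$, the surviving columns $z^p(1-z)^mL_i$ and $z^m(1-z)^pL_i$ reduce by column operations to $z^pL_i$, $0\le p\le 2m-1$.
\end{itemize}
The doubling $m\mapsto 2m$ is what produces the dyadic ranges in the final product. The constants $p!(m-p-1)!$ produce the $i!$-powers. The exponent bookkeeping is routine only once this generalized recursion is available.
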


\subsection{Multiple polylogarithms}
\begin{defn}\label{defn:multiple_polylog}
 Let $\bm{k}=(k_1,\dots,k_r)$ be a nonempty index.
 We define the \emph{multiple polylogarithm} $\Li_{\bm{k}}(z)$ by
 \[
  \Li_{\bm{k}}(z)=\sum_{1\le n_1<\dots<n_r}\frac{z^{n_r}}{n_1^{k_1}\dotsm n_r^{k_r}}\in\Q[[z]].
 \]
\end{defn}

For notational simplicity, for a formal Laurent series $f(z)\in\lau$, we suppress the argument $z$ and simply write $f$;
in particular, we write $\Li_{\bm{k}}$ for $\Li_{\bm{k}}(z)$.
For $f\in\lau$ and $n\in\Z$, we write $[z^n]f$ for the coefficient of $z^n$ in $f$.
Observe that
\[
 [z^n]\Li_{\bm{k}}=\zeta_{=n}(\bm{k})
\]
for all nonempty indices $\bm{k}$ and $n\ge1$,
so that the left-hand side of Proposition~\ref{prop:=} is equal to
\[
 \begin{vmatrix}[z^{N+1}]\Li_{\bm{k}_1}&\cdots&[z^{N+1}]\Li_{\bm{k}_{2^w-1}}\\\vdots&\ddots&\vdots\\ [z^{N+2^w-1}]\Li_{\bm{k}_1}&\cdots&[z^{N+2^w-1}]\Li_{\bm{k}_{2^w-1}}\end{vmatrix}.
\]

\subsection{Properties of $D_N$}
Let $\lauseq=\bigcup_{l\ge0}\lau^l$ denote the set of all finite (possibly empty) sequences of formal Laurent series.
For $N\in\Z$ and $\bm{f}=(f_1,\dots,f_l)\in\lauseq$, we write
\[
 D_N(\bm{f})=\begin{vmatrix}[z^{N+1}]f_1&\cdots&[z^{N+1}]f_l\\\vdots&\ddots&\vdots\\ [z^{N+l}]f_1&\cdots&[z^{N+l}]f_l\end{vmatrix},
\]
where we understand that $D_N(\emptyset)=1$ for all $N\in\Z$.
With this notation, the left-hand side of Proposition~\ref{prop:=} is equal to $D_N(\Li_{\bm{k}_1},\dots,\Li_{\bm{k}_{2^w-1}})$.

\begin{lem}\label{lem:1-z}
 Let $m\ge0$, $N\ge-1$, and $\bm{f}=(f_1,\dots,f_l)\in\lauseq$.
 Then we have
 \[
  D_N((1-z)^{-m},\dots,(1-z)^{-1},\bm{f})
  =(-1)^{m(m-1)/2}D_{N+m}((1-z)^m\bm{f}),
 \]
 where $(1-z)^m\bm{f}=((1-z)^mf_1,\dots,(1-z)^mf_l)$.
\end{lem}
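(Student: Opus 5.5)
The plan is to induct on $m$. Each inductive step peels off the column $(1-z)^{-1}$ using row differences, which amount to multiplying every column by $(1-z)$. The case $m=0$ is trivial, since both sides equal $D_N(\bm{f})$ and $(-1)^0=1$.

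For the inductive step, let $m\ge1$ and set $L=m+l$. The rows of the determinant are indexed by the exponents $n=N+1,\dots,N+L$. For $n=N+L,N+L-1,\dots,N+2$, in that order, I replace row $n$ by (row $n$) minus (row $n-1$); this does not change the determinant. The key identity is $[z^n]g-[z^{n-1}]g=[z^n]\bigl((1-z)g\bigr)$ for any $g\in\lau$. Hence, for $n\ge N+2$, the entry in row $n$ and column $g$ becomes $[z^n]\bigl((1-z)g\bigr)$. The first row keeps its entries $[z^{N+1}]g$.

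Next I look at the column belonging to $(1-z)^{-1}$, which is the $m$-th column. Its first entry is $[z^{N+1}](1-z)^{-1}=1$, because $N+1\ge0$; this is exactly where the hypothesis $N\ge-1$ is used. Its other entries are $[z^n]1=0$, because $n\ge N+2\ge1$. Expanding along this column gives the sign $(-1)^{1+m}$ times the minor obtained by deleting the first row and the $m$-th column. In that minor the rows are $n=N+2,\dots,N+L$. The remaining columns are the coefficients of $(1-z)\cdot(1-z)^{-j}=(1-z)^{-(j-1)}$ for $j=m,\dots,2$, followed by the coefficients of $(1-z)\bm{f}$. The minor is therefore
\[
D_{N+1}\bigl((1-z)^{-(m-1)},\dots,(1-z)^{-1},(1-z)\bm{f}\bigr),
\]
where the list of powers of $(1-z)^{-1}$ is empty when $m=1$.

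Since $N+1\ge-1$, the induction hypothesis applies to this minor with $m-1$, $N+1$, and $(1-z)\bm{f}$ in place of $m$, $N$, and $\bm{f}$. It gives $(-1)^{(m-1)(m-2)/2}D_{N+m}\bigl((1-z)^m\bm{f}\bigr)$. For the sign, $(-1)^{m+1}=(-1)^{m-1}$ and $(m-1)+(m-1)(m-2)/2=m(m-1)/2$, so the signs combine to $(-1)^{m(m-1)/2}$, as claimed.

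The argument is a direct computation, and the only point needing care is the order of the row operations. Each row must be reduced using the original entries of the row above it, so the operations go from the bottom row upward. The degenerate cases $l=0$ and $m=1$ need no separate treatment, because the convention $D_N(\emptyset)=1$ handles them.
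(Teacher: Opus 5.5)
Your proof is correct and is essentially the paper's argument: induction on $m$, with consecutive row differences turning each column $g$ into $(1-z)g$, followed by a cofactor expansion along the $(1-z)^{-1}$ column. The only difference is bookkeeping. You subtract from each row the one above it, which keeps the surviving $1$ in the top row and lets you expand in place with sign $(-1)^{m+1}$; the paper instead proves the case $m=1$ separately (with the $1$ ending up in the bottom row) and then moves the $(1-z)^{-1}$ column to the front at the cost of $(-1)^m$.
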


\begin{proof}
 The proof proceeds by induction on $m$.
 For $m=0$, the desired formula holds trivially since both sides coincide with $D_N(\bm{f})$.
 For $m=1$, since $[z^{N+i}](1-z)^{-1}=1$ for $i=1,\dots,l+1$, performing elementary row operations gives
 \begin{align*}
  D_N((1-z)^{-1},\bm{f})
  &=\begin{vmatrix}1&[z^{N+1}]f_1&\cdots&[z^{N+1}]f_l\\\vdots&\vdots&\ddots&\vdots\\1&[z^{N+l+1}]f_1&\cdots&[z^{N+l+1}]f_l\end{vmatrix}\\
  &=\begin{vmatrix}0&[z^{N+1}]f_1-[z^{N+2}]f_1&\cdots&[z^{N+1}]f_l-[z^{N+2}]f_l\\\vdots&\vdots&\ddots&\vdots\\0&[z^{N+l}]f_1-[z^{N+l+1}]f_1&\cdots&[z^{N+l}]f_l-[z^{N+l+1}]f_l\\1&[z^{N+l+1}]f_1&\cdots&[z^{N+l+1}]f_l\end{vmatrix}\\
  &=(-1)^l\begin{vmatrix}-[z^{N+2}](1-z)f_1&\cdots&-[z^{N+2}](1-z)f_l\\\vdots&\ddots&\vdots\\-[z^{N+l+1}](1-z)f_1&\cdots&-[z^{N+l+1}](1-z)f_l\end{vmatrix}\\
  &=D_{N+1}((1-z)\bm{f}).
 \end{align*}
 If the formula holds for $m$, then using the $m=1$ case and moving the $(m+1)$st column to the first position give
 \begin{align*}
  &D_N((1-z)^{-m-1},\dots,(1-z)^{-1},\bm{f})\\
  &=(-1)^mD_N((1-z)^{-1},(1-z)^{-m-1},\dots,(1-z)^{-2},\bm{f})\\
  &=(-1)^mD_{N+1}((1-z)^{-m},\dots,(1-z)^{-1},(1-z)\bm{f})\\
  &=(-1)^m(-1)^{m(m-1)/2}D_{N+m+1}((1-z)^{m+1}\bm{f})\\
  &=(-1)^{m(m+1)/2}D_{N+m+1}((1-z)^{m+1}\bm{f}),
 \end{align*}
 which proves that the formula holds for $m+1$.
\end{proof}

For $\bm{f}=(f_1,\dots,f_l)\in\lauseq$, we write $\bm{f}^{(m)}=(f_1^{(m)},\dots,f_l^{(m)})\in\lauseq$ for the sequence of the $m$-th derivatives of the entries of $\bm{f}$.
For $n\ge-1$ and $m\ge0$, we write $\spf_n(m)=\prod_{i=n+1}^{n+m}i!$, where we understand that $\spf_n(0)=1$.
Note that $\spf_0(m)=\prod_{i=1}^{m}i!$ is known as the superfactorial of $m$.

\begin{lem}\label{lem:diff}
 Let $m\ge0$, $N\ge m-1$, and $\bm{f}=(f_1,\dots,f_l)\in\lauseq$.
 Then we have
 \[
  D_N(\bm{f})=\frac{\spf_{N-m}(m)}{\spf_{N+l-m}(m)}D_{N-m}(\bm{f}^{(m)}).
 \]
\end{lem}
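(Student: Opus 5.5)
Since $[z^{n}]f^{(m)}=\frac{(n+m)!}{n!}[z^{n+m}]f$, the plan is to rewrite the right-hand determinant $D_{N-m}(\bm{f}^{(m)})$ entrywise and pull the resulting factorial ratios out of the rows. No induction on $m$ is needed; the identity is a direct row-scaling computation.

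Row $i$ of $D_{N-m}(\bm{f}^{(m)})$, for $i=1,\dots,l$, consists of the coefficients $[z^{N-m+i}]f_j^{(m)}$. By the formula above these equal
\[
 [z^{N-m+i}]f_j^{(m)}=\frac{(N+i)!}{(N-m+i)!}[z^{N+i}]f_j .
\]
This requires checking the case of negative exponents in a Laurent series. If $n=N-m+i\ge0$, the formula is the usual one, since $[z^n]f^{(m)}=(n+1)(n+2)\dotsm(n+m)[z^{n+m}]f$. The hypothesis $N\ge m-1$ gives $n=N-m+i\ge N-m+1\ge0$ for every $i\ge1$. Hence all the factorials involved are of nonnegative integers and all the factors are nonzero. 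This nonvanishing matters because we divide by these factors.

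Each row $i$ of $D_{N-m}(\bm{f}^{(m)})$ is therefore the corresponding row of $D_N(\bm{f})$ multiplied by $\frac{(N+i)!}{(N-m+i)!}$. Factoring these scalars out of the rows gives
\[
 D_{N-m}(\bm{f}^{(m)})=\prod_{i=1}^{l}\frac{(N+i)!}{(N-m+i)!}\,D_N(\bm{f}).
\]
The remaining step is to identify the product with $\spf_{N+l-m}(m)/\spf_{N-m}(m)$. I would do this by telescoping, regrouping the double product over $i$ and $t$ in two ways:
\[
 \prod_{i=1}^{l}\frac{(N+i)!}{(N-m+i)!}=\prod_{i=1}^{l}\prod_{t=1}^{m}\frac{(N-m+i+t)!}{(N-m+i+t-1)!}=\prod_{t=1}^{m}\frac{(N-m+l+t)!}{(N-m+t)!}.
\]
The right-hand side is exactly $\spf_{N+l-m}(m)/\spf_{N-m}(m)$, using the definition $\spf_n(m)=\prod_{i=n+1}^{n+m}i!$. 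Solving for $D_N(\bm{f})$ then gives the lemma.

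The case $l=0$ is trivial, since both determinants equal $1$ and both superfactorial quotients are $1$. The only delicate point is the index bookkeeping in the telescoping and the role of the hypothesis $N\ge m-1$; there is no real obstacle beyond that.
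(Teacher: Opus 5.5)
Your proof is correct and uses the same idea as the paper's: each row of $D_{N-m}(\bm{f}^{(m)})$ is the corresponding row of $D_N(\bm{f})$ scaled by the nonzero factor from $[z^n]f^{(m)}=\frac{(n+m)!}{n!}[z^{n+m}]f$, and $N\ge m-1$ guarantees $n\ge0$. The paper scales rows only for $m=1$ and then inducts on $m$, while you scale by the $m$-fold factors at once and recover $\spf_{N+l-m}(m)/\spf_{N-m}(m)$ by your double telescoping; this is the same bookkeeping done in one step.
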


\begin{proof}
 The proof proceeds by induction on $m$.
 The formula is trivial for $m=0$.
 For $m=1$, we have $N\ge0$ and so
 \begin{align*}
  D_N(\bm{f})
  &=\begin{vmatrix}[z^{N+1}]f_1&\cdots&[z^{N+1}]f_l\\\vdots&\ddots&\vdots\\ [z^{N+l}]f_1&\cdots&[z^{N+l}]f_l\end{vmatrix}\\
  &=\begin{vmatrix}\frac{1}{N+1}[z^{N}]f_1'&\cdots&\frac{1}{N+1}[z^{N}]f_l'\\\vdots&\ddots&\vdots\\\frac{1}{N+l}[z^{N+l-1}]f_1'&\cdots&\frac{1}{N+l}[z^{N+l-1}]f_l'\end{vmatrix}
  =\frac{N!}{(N+l)!}D_{N-1}(\bm{f}').
 \end{align*}
 If the formula holds for $m$, then for $N\ge m$, using the $m=1$ case gives
 \begin{align*}
  D_N(\bm{f})
  &=\frac{\spf_{N-m}(m)}{\spf_{N+l-m}(m)}D_{N-m}(\bm{f}^{(m)})\\
  &=\frac{\spf_{N-m}(m)}{\spf_{N+l-m}(m)}\cdot\frac{(N-m)!}{(N+l-m)!}D_{N-m-1}(\bm{f}^{(m+1)})\\
  &=\frac{\spf_{N-m-1}(m+1)}{\spf_{N+l-m-1}(m+1)}D_{N-m-1}(\bm{f}^{(m+1)}),
 \end{align*}
 which proves that the formula holds for $m+1$.
\end{proof}

\subsection{Computation of derivatives of multiple polylogarithms}
For simplicity, we write $L_i=\Li_{\bm{k}_i}$ for $i\ge1$ and set $L_0=1$.
Then we have
\[
 L_i'=
 \begin{cases}
  (1-z)^{-1}L_{(i-1)/2}&\text{if $i$ is odd};\\
  z^{-1}L_{i/2}&\text{if $i$ is even and $i\ge2$}.
 \end{cases}
\]
In addition, for $p\ge0$ and $i\ge1$, we write $L_{p,i}=z^pL_i\in\Q[[z]]$.

For $p,q\ge0$ and $i\ge1$, we define $\Q$-linear subspaces $V_{p,i}^{(q)}$ and $\tilde{V}_{p,i}^{(q)}$ of $\lau$ by
\begin{align*}
 V_{p,i}^{(q)}&=\langle L_{p',i'}^{(q)}\mid0\le p'\le\max\{p,q-1\},\;1\le i'<i\rangle+\langle L_{p',i}^{(q)}\mid0\le p'<p\rangle\\
 &\hphantom{{}={}}+\langle z^r\mid0\le r\le p-q\rangle+\langle z^{-r}\mid1\le r\le q-p-1\rangle
\end{align*}
and $\tilde{V}_{p,i}^{(q)}=V_{p,i}^{(q)}+\langle L_{p,i}^{(q)}\rangle$,
where $\langle\cdot\rangle$ denotes the $\Q$-linear span.

\begin{lem}\label{lem:Vpiq_Vpi(q+1)}
 For $p\ge0$ and $q,i\ge1$, we have $\{f'\mid f\in V_{p,i}^{(q-1)}\}\subset V_{p,i}^{(q)}$.
\end{lem}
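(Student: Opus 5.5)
Since differentiation is linear, it suffices to check that the derivative of each spanning element of $V_{p,i}^{(q-1)}$ lies in $V_{p,i}^{(q)}$. The spanning elements come in four families, and I will treat them in turn.

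\emph{The elements $L_{p',i'}^{(q-1)}$.} For $0\le p'\le\max\{p,q-2\}$ and $1\le i'<i$, the derivative is $L_{p',i'}^{(q)}$. Since $\max\{p,q-2\}\le\max\{p,q-1\}$, this is a generator of $V_{p,i}^{(q)}$.

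\emph{The elements $L_{p',i}^{(q-1)}$.} For $0\le p'<p$, the derivative is $L_{p',i}^{(q)}$, which is also a generator of $V_{p,i}^{(q)}$.

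\emph{The polynomial generators $z^r$.} For $0\le r\le p-q+1$, the derivative is $rz^{r-1}$. If $r=0$ this is $0$. Otherwise $0\le r-1\le p-q$, so $z^{r-1}$ lies in $\langle z^s\mid0\le s\le p-q\rangle$.

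\emph{The negative powers $z^{-r}$.} For $1\le r\le q-p-2$, the derivative is $-rz^{-r-1}$, and $2\le r+1\le q-p-1$. So $z^{-r-1}$ lies in $\langle z^{-s}\mid1\le s\le q-p-1\rangle$.

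The only delicate point is the bookkeeping of the index ranges: the bound on $p'$ in the first family, and the ranges of exponents for $z^{\pm r}$ when $p-q$ crosses $0$. In the polynomial case with $r=p-q+1\ge1$ we get $r-1=p-q\ge0$, which is fine. When $q=p+1$ the $z^r$ range contains only $r=0$, whose derivative vanishes, and the $z^{-r}$ range is empty. When $q=p+2$ the first $z^{-r}$ range is empty, so nothing needs checking.

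Note that no structural information about the $L_i$ (such as the formula for $L_i'$) is needed here: the inclusion is purely formal, because $V_{p,i}^{(q)}$ is defined using $q$-th derivatives with shifted ranges that exactly match one differentiation. I expect no real obstacle beyond checking these inequalities carefully, including the edge case $p'\le q-2$ versus $p'\le q-1$ when $q>p$.
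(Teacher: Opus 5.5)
Your proof is correct and is exactly the ``straightforward case analysis'' the paper leaves to the reader. Differentiating each of the four families of generators of $V_{p,i}^{(q-1)}$ lands in the corresponding family for $V_{p,i}^{(q)}$, thanks to $\max\{p,q-2\}\le\max\{p,q-1\}$ and the exponent shifts $r-1\le p-q$ and $r+1\le q-p-1$.
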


\begin{proof}
 The lemma can be verified by a straightforward case analysis.
\end{proof}

\begin{prop}\label{prop:Lpi(q)}
 Let $p,q\ge0$ and $i\ge1$.
 Then we have
 \[
  L_{p,i}^{(q)}\equiv
  \begin{cases}
   p!(q-p-1)!(1-z)^{-(q-p)}L_{(i-1)/2}&\text{if $p<q$ and $i$ is odd;}\\
   (-1)^{q-p-1}p!(q-p-1)!z^{-(q-p)}L_{i/2}&\text{if $p<q$ and $i$ is even;}\\
   \frac{p!}{(p-q)!}L_{p-q,i}&\text{if $p\ge q$}
  \end{cases}
  \pmod{V_{p,i}^{(q)}}.
 \]
\end{prop}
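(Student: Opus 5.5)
We argue by induction on $q$, with $p$ and $i$ fixed. For $q=0$ the third case applies, with $p\ge0=q$. It asserts $L_{p,i}\equiv L_{p,i}$, which is trivial.

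For the step from $q-1$ to $q$, write the inductive hypothesis as $L_{p,i}^{(q-1)}=A+v$ with $v\in V_{p,i}^{(q-1)}$ and $A$ the main term. Differentiating gives $L_{p,i}^{(q)}=A'+v'$. By Lemma~\ref{lem:Vpiq_Vpi(q+1)}, $v'\in V_{p,i}^{(q)}$. It therefore suffices to show that $A'$ is congruent to the claimed main term modulo $V_{p,i}^{(q)}$. We split into three regimes.

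\emph{Case $p\ge q$.} Here $p\ge q-1$, so $A=\frac{p!}{(p-q+1)!}z^{p-q+1}L_i$. Then
\[
A'=\frac{p!}{(p-q)!}z^{p-q}L_i+\frac{p!}{(p-q+1)!}z^{p-q+1}L_i'.
\]
The first term is the claimed one, so we must show that $z^{p-q+1}L_i'\in V_{p,i}^{(q)}$.
\begin{itemize}
\item If $i$ is even, $z^{p-q+1}L_i'=z^{p-q}L_{i/2}$. This equals $z^{p-q}$ when $i=2$ (so $L_1$, $L_0=1$). Otherwise, since $i/2<i$ and $p-q\ge0$, we need $z^{p-q}L_{i/2}$ in the span; I would argue this via the generators $L_{p',i'}^{(q)}$, $i'<i$.
\item If $i$ is odd, $z^{p-q+1}L_i'=z^{p-q+1}(1-z)^{-1}L_{(i-1)/2}$.
\end{itemize}
Neither term is immediately one of the listed generators. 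So the honest approach is to show more generally that $z^aL_{i'}$ with $i'<i$ and small $a$, and $z^a(1-z)^{-1}L_{i'}$, lie in $V_{p,i}^{(q)}$. We do this by expressing them through $L^{(q)}_{p',i''}$ for suitable $i''<i$, using the derivative formula for $L_{i''}'$ with $i''=2i'$ or $2i'+1$ (both $<i$ when $i'<i/2$). Concretely, $L_{0,2i'}^{(q)}$ and related terms produce exactly such expressions by the already-established lower cases. This requires a secondary induction on $i$, which I would run as the outer induction.

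\emph{Case $p=q-1$.} Here $A=p!\,zL_i$ with the exponent of $z$ equal to one, and $A'=p!L_i+p!zL_i'$. Now $L_i=L_{0,i}^{(0)}$ must be absorbed: note $V_{p,i}^{(q)}$ contains $L_{p',i}^{(q)}$ for $p'<p$, but not $L_i$ itself. So rather than differentiating the $q-1$ statement, I would compute directly using Leibniz:
\[
L_{p,i}^{(q)}=\sum_{s}\binom{q}{s}(z^p)^{(s)}L_i^{(q-s)}.
\]
The $s=p$ term, $\binom{q}{p}p!\,L_i^{(q-p)}$, gives the main term. Indeed $L_i^{(q-p)}=(L_i')^{(q-p-1)}$, and repeatedly differentiating $(1-z)^{-1}L_{(i-1)/2}$ or $z^{-1}L_{i/2}$ gives leading terms $(q-p-1)!(1-z)^{-(q-p)}L_{(i-1)/2}$, respectively $(-1)^{q-p-1}(q-p-1)!z^{-(q-p)}L_{i/2}$. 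All other terms involve derivatives of $L_{(i-1)/2}$ or $L_{i/2}$, i.e.\ of indices smaller than $i$, and are expressible via $L_{p',i'}^{(q)}$ with $i'<i$, $p'\le q-1$. For the terms $s<p$, we use that $z^{p-s}L_i^{(q-s)}$ is, modulo lower terms, a multiple of $L_{p',i}^{(q)}$ with $p'<p$.

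\emph{Case $p<q-1$.} Here we do differentiate the hypothesis $A=c(1-z)^{-(q-1-p)}L_{(i-1)/2}$ (or its $z^{-1}$ analogue). Its derivative gives the claimed term, with constant $(q-p-1)$ times the previous one, plus $c(1-z)^{-(q-1-p)}L_{(i-1)/2}'$. This last term involves only indices below $i$, so again lies in the span of $L_{p',i'}^{(q)}$, $i'<i$, $p'\le q-1$, together with pure powers $z^{-r}$ when $L_0=1$ is reached.

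\emph{Main obstacle.} In every case the key difficulty is the membership claim: functions of the form $z^a(1-z)^{-b}L_{i'}^{(c)}$ with $i'<i$, and pure powers $z^{-r}$ or $z^r$, must lie in $V_{p,i}^{(q)}$. I would establish this first, as a separate lemma, by an induction on $i$ that inverts the triangular relations given by the proposition for smaller $i$.
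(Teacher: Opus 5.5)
Your skeleton matches the paper's: differentiate $P(p,q-1,i)$, handle the error term with Lemma~\ref{lem:Vpiq_Vpi(q+1)}, then show the extra term coming from the main term lies in $V_{p,i}^{(q)}$. But that last membership step is the real content of the proof (the paper's Lemma~\ref{lem:induction_differentiation} plus the inclusions of Lemma~\ref{lem:Vpiq_subset}), and your plan for it breaks down for $p\ge q$ with $i$ odd.

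There you need $z^{p-q+1}(1-z)^{-1}L_{(i-1)/2}\in V_{p,i}^{(q)}$, and you propose to get it from generators $L^{(q)}_{p',i''}$ with $i''<i$. Your parenthetical ``both $<i$ when $i'<i/2$'' fails for $i'=(i-1)/2$, since then $2i'+1=i$. For $i=1$ there are no smaller indices at all: already $L_{1,1}^{(1)}=L_1+(1-z)^{-1}-1$ with $V_{1,1}^{(1)}=\langle(1-z)^{-1},1\rangle$, and $(1-z)^{-1}=L_{0,1}^{(1)}$ has the same index.

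The missing idea is to split $z^{p-q+1}(1-z)^{-1}=(1-z)^{-1}-\sum_{r=0}^{p-q}z^r$. The piece $(1-z)^{-1}L_{(i-1)/2}=L_i'$ is absorbed by the same-index generator $L_{q-1,i}^{(q)}$, using $P(q-1,q,i)$ and $\tilde{V}_{q-1,i}^{(q)}\subset V_{p,i}^{(q)}$. This is exactly why $V_{p,i}^{(q)}$ contains $\langle L_{p',i}^{(q)}\mid p'<p\rangle$. Each remaining $z^rL_{(i-1)/2}=L_{r,(i-1)/2}$ comes from $P(r+q,q,(i-1)/2)$, or is the pure power $z^r$ if $i=1$.

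This also shows your induction scheme is too weak. With $i$ outer and $q$ inner at fixed $p$, you never have $P(q-1,q,i)$, which has the same $i$ and $q$ but a different $p$. The paper inducts lexicographically on $(i,p+q)$, and $(q-1)+q<p+q$ makes it available.

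Separately, your case $p=q-1$ rests on a miscomputation. The main term of $P(p,q-1,i)$ is $\frac{p!}{(p-q+1)!}L_{p-q+1,i}=p!\,L_i$ (exponent $0$, not $1$). Its derivative $p!\,L_i'$ is already exactly the claimed main term, so there is nothing to absorb. The Leibniz detour is therefore unnecessary, and its assertion that $z^{p-s}L_i^{(q-s)}$ is a multiple of some $L_{p',i}^{(q)}$ modulo lower terms is unjustified. Also, for $i=2$ one has $z^{p-q}L_{i/2}=z^{p-q}L_1$, not $z^{p-q}$.

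Your claim in the case $p<q-1$, that only indices below $i$ are needed, is correct. It still requires the explicit identities $(1-z)^{-(q-p-1)}z^{-1}=z^{-1}+\sum_{r=1}^{q-p-1}(1-z)^{-r}$ and $z^{-(q-p-1)}(1-z)^{-1}=(1-z)^{-1}+\sum_{r=1}^{q-p-1}z^{-r}$. It also needs the instances $P(p',q,i')$ with $p\le p'\le q-1$ and $i'<i$. You supply neither.
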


The remainder of this subsection is devoted to proving Proposition~\ref{prop:Lpi(q)}.
For $p,q\ge0$ and $i\ge1$,
let $P(p,q,i)$ denote the assertion of Proposition~\ref{prop:Lpi(q)} for the triple $(p,q,i)$.

\begin{lem}\label{lem:Vpiq_subset}
 Let $p,p',q\ge0$ and $1\le i'<i$.
 \begin{enumerate}
  \item If $p<q-1$ and $p\le p'\le q-1$, then $\tilde{V}_{p',i'}^{(q)}\subset V_{p,i}^{(q)}$.
  \item If $p>q-1$ and $q-1\le p'\le p$, then $\tilde{V}_{p',i'}^{(q)}\subset V_{p,i}^{(q)}$.
  \item If $p>q-1$ and $q\ge1$, then $\tilde{V}_{q-1,i}^{(q)}\subset V_{p,i}^{(q)}$.
 \end{enumerate}
\end{lem}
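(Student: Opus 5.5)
The plan is to compare generators directly. Write $\tilde V_{p',i'}^{(q)}$ as the sum of four families of generators, and check that each family lies in one of the four summands defining $V_{p,i}^{(q)}$:
\begin{itemize}
 \item[(a)] $L_{p'',i''}^{(q)}$ with $0\le p''\le\max\{p',q-1\}$ and $i''<i'$;
 \item[(b)] $L_{p'',i'}^{(q)}$ with $0\le p''\le p'$;
 \item[(c)] $z^r$ with $0\le r\le p'-q$;
 \item[(d)] $z^{-r}$ with $1\le r\le q-p'-1$.
\end{itemize}
Families (a) and (b) contain the extra generator $L_{p',i'}^{(q)}$ coming from the tilde. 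Throughout, set $M=\max\{p,q-1\}$. Since $i'<i$, all derivatives $L^{(q)}_{p'',i''}$ with $i''\le i'<i$ and $p''\le M$ lie in the first summand of $V_{p,i}^{(q)}$. So (a) and (b) are harmless as soon as $\max\{p',q-1\}\le M$, and the remaining work is comparing the monomial ranges.

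For (1), assume $p<q-1$ and $p\le p'\le q-1$. Then $M=q-1$, and $\max\{p',q-1\}=q-1$, so (a) and (b) lie in $V_{p,i}^{(q)}$. For (c), note $p'-q\le -1$, so the range is empty. For (d), the range is $1\le r\le q-p'-1\le q-p-1$, so these monomials lie in the fourth summand of $V_{p,i}^{(q)}$.

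For (2), assume $p>q-1$, i.e. $p\ge q$, and $q-1\le p'\le p$. Then $M=p$ and $\max\{p',q-1\}=p'\le p$, so (a) and (b) are fine. For (c), the range is $r\le p'-q\le p-q$, which lies in the third summand of $V_{p,i}^{(q)}$. For (d), $q-p'-1\le 0$, so the range is empty.

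For (3), assume $p\ge q\ge1$ and $p'=q-1$, with the same $i$. The generators of $V_{q-1,i}^{(q)}$ are as follows.
\begin{itemize}
 \item $L_{p'',i''}^{(q)}$ with $p''\le q-1\le p$ and $i''<i$; these lie in the first summand.
 \item $L_{p'',i}^{(q)}$ with $p''<q-1$; together with the tilde generator $L_{q-1,i}^{(q)}$, these have $p''\le q-1<p$, so they lie in the second summand $\langle L_{p'',i}^{(q)}\mid p''<p\rangle$.
 \item $z^r$ with $r\le -1$; this range is empty.
 \item $z^{-r}$ with $1\le r\le 0$; this range is also empty.
\end{itemize}

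This is pure bookkeeping, and the only delicate points are the boundary cases: the equality $p'=q-1$ in (1), and the use of $p\ge q$ to get $q-1<p$ in (3). I expect no real obstacle beyond verifying carefully that each empty range is indeed empty.
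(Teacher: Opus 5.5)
Your proposal is correct and takes the same approach as the paper. The paper dismisses this lemma as a straightforward case analysis, and your comparison of the four generator families (empty monomial ranges, the bounds $q-p'-1\le q-p-1$ and $p'-q\le p-q$, and $q-1<p$ in part (3)) is exactly that analysis written out.
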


\begin{proof}
 The lemma can be verified by a straightforward case analysis.
\end{proof}

\begin{lem}\label{lem:induction_differentiation}
 Let $p,q\ge0$ and $i\ge1$.
 Suppose that $P(p',q',i')$ holds whenever $i'<i$ and that $P(p',q',i)$ holds whenever $p'+q'<p+q$.
 \begin{enumerate}
  \item If $p<q-1$ and $i$ is odd, then
   $(1-z)^{-(q-p-1)}L_{(i-1)/2}'\in V_{p,i}^{(q)}$.
  \item If $p<q-1$ and $i$ is even, then
   $z^{-(q-p-1)}L_{i/2}'\in V_{p,i}^{(q)}$.
  \item If $p>q-1$ and $q\ge1$, then
   $z^{p-q+1}L_i'\in V_{p,i}^{(q)}$.
 \end{enumerate}
\end{lem}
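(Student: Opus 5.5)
The plan is to prove all three parts the same way. First expand the given function by the formula for $L_j'$ (and, where needed, a partial-fraction decomposition). Then recognise each resulting term as, up to a nonzero constant, the leading term supplied by some $P(p',q,i')$ with either $i'<i$, or $i'=i$ and $p'+q<p+q$. Such a $P(p',q,i')$ writes the term as a combination of $L_{p',i'}^{(q)}$ and an element of $V_{p',i'}^{(q)}$, that is, as an element of $\tilde V_{p',i'}^{(q)}$. Lemma~\ref{lem:Vpiq_subset} then places $\tilde V_{p',i'}^{(q)}$ inside $V_{p,i}^{(q)}$. The only work is choosing $p'$ in the range allowed by Lemma~\ref{lem:Vpiq_subset} and checking the index bounds.

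\emph{Part (3)} ($p\ge q\ge1$). If $i$ is even, put $j=i/2<i$. Then $z^{p-q+1}L_i'=z^{p-q}L_j=L_{p-q,j}$. By $P(p,q,j)$ this equals $\frac{(p-q)!}{p!}L_{p,j}^{(q)}$ modulo $V_{p,j}^{(q)}$, so it lies in $\tilde V_{p,j}^{(q)}\subset V_{p,i}^{(q)}$ by Lemma~\ref{lem:Vpiq_subset}(2).

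If $i$ is odd, put $j=(i-1)/2$ and write
\[
 z^{p-q+1}(1-z)^{-1}=(1-z)^{-1}-\sum_{r=0}^{p-q}z^r .
\]
The term $(1-z)^{-1}L_j$ is, up to the factor $(q-1)!$, the leading term in $P(q-1,q,i)$. This instance is available because $(q-1)+q<p+q$. Hence $(1-z)^{-1}L_j\in\tilde V_{q-1,i}^{(q)}\subset V_{p,i}^{(q)}$ by Lemma~\ref{lem:Vpiq_subset}(3).

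The terms $z^rL_j$ with $r\le p-q$ are of two kinds. When $j=0$ they are monomials $z^r$, which lie in $V_{p,i}^{(q)}$ by definition. When $j\ge1$ they equal $L_{r,j}$ with $j<i$, and we treat them as in the even case via $P(r+q,q,j)$; here $q-1\le r+q\le p$, so Lemma~\ref{lem:Vpiq_subset}(2) applies.

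\emph{Part (2)} ($p<q-1$, $i$ even). Put $j=i/2$. If $j$ is even, then $z^{-(q-p-1)}L_j'=z^{-(q-p)}L_{j/2}$, which up to a nonzero constant is the leading term of $P(p,q,j)$. So it lies in $\tilde V_{p,j}^{(q)}\subset V_{p,i}^{(q)}$ by Lemma~\ref{lem:Vpiq_subset}(1).

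If $j$ is odd, put $j''=(j-1)/2$ and use
\[
 z^{-a}(1-z)^{-1}=\sum_{r=1}^{a}z^{-r}+(1-z)^{-1},\qquad a=q-p-1 .
\]
The term $(1-z)^{-1}L_{j''}$ comes from $P(q-1,q,j)$. Each term $z^{-r}L_{j''}$ comes from $P(q-r,q,2j'')$, which is legitimate because $2j''<j<i$ and $p\le q-r\le q-1$. In the degenerate case $j''=0$ the terms are simply $z^{-r}$ with $1\le r\le q-p-1$, which lie in $V_{p,i}^{(q)}$ directly. Lemma~\ref{lem:Vpiq_subset}(1) finishes each case.

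\emph{Part (1)} ($p<q-1$, $i$ odd). Put $i'=(i-1)/2$. If $i'=0$ then $L_0'=0$ and there is nothing to prove. Otherwise $L_{i'}'$ is either $(1-z)^{-1}L_{(i'-1)/2}$ or $z^{-1}L_{i'/2}$.

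In the first case, $(1-z)^{-(q-p)}L_k$ with $k=(i'-1)/2$ is the leading term of $P(p,q,2k+1)$, and $2k+1=i'<i$.

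In the second case, with $k=i'/2$, use
\[
 z^{-1}(1-z)^{-a}=z^{-1}+\sum_{s=1}^{a}(1-z)^{-s},\qquad a=q-p-1 .
\]
The term $z^{-1}L_k$ comes from $P(q-1,q,2k)$. Each term $(1-z)^{-s}L_k$ comes from $P(q-s,q,2k+1)$, where $2k+1=i'+1\le i-1<i$ and $p<q-s\le q-1$. Again Lemma~\ref{lem:Vpiq_subset}(1) applies.

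The main obstacle I expect is bookkeeping rather than any single idea. One must check that every invoked instance of $P$ is covered by the hypotheses, in particular that the same-$i$ instance $P(q-1,q,i)$ used in part~(3) has smaller $p'+q'$. One must also check that each auxiliary parameter $p'$ falls in the window required by the corresponding item of Lemma~\ref{lem:Vpiq_subset}, with the edge cases $L_0=1$ handled by the pure monomials $z^{\pm r}$ in the definition of $V_{p,i}^{(q)}$.
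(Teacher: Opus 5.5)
Your proposal is correct and matches the paper's proof essentially step for step. It uses the same partial-fraction identities, the same instances $P(q-1,q,\cdot)$ and $P(p',q,\cdot)$ with $p'$ ranging over $\{p+1,\dots,q-1\}$ or $\{q,\dots,p\}$, and the same items of Lemma~\ref{lem:Vpiq_subset}; the only difference is cosmetic, in that you absorb the paper's separate edge cases ($i=2$ in part (2), $i=1$ in part (3)) into the general computation using $L_0=1$ and the monomials $z^{\pm r}$ in $V_{p,i}^{(q)}$.
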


\begin{proof}
 In the proof below, we use Lemma~\ref{lem:Vpiq_subset} without further mention.
 \begin{enumerate}
  \item If $i=1$, then the statement is obvious because $L_{(i-1)/2}'=L_0'=0$.
   If $(i-1)/2$ is odd, then $1\le(i-1)/2<i$ and so $P(p,q,(i-1)/2)$ shows that
   \begin{align*}
    (1-z)^{-(q-p-1)}L_{(i-1)/2}'
    &=(1-z)^{-(q-p)}L_{(i-3)/4}\\
    &\in\tilde{V}_{p,(i-1)/2}^{(q)}\subset V_{p,i}^{(q)}.
   \end{align*}
   If $(i-1)/2$ is even and $i\ge5$, then $1\le(i-1)/2<(i+1)/2<i$ and so $P(q-1,q,(i-1)/2)$ and $P(p',q,(i+1)/2)$ for $p'\in\{p+1,\dots,q-1\}$ show that
   \begin{align*}
    (1-z)^{-(q-p-1)}L_{(i-1)/2}'
    &=(1-z)^{-(q-p-1)}z^{-1}L_{(i-1)/4}\\
    &=\Biggl(z^{-1}+\sum_{r=1}^{q-p-1}(1-z)^{-r}\Biggr)L_{(i-1)/4}\\
    &\in\tilde{V}_{q-1,(i-1)/2}^{(q)}+\sum_{p'=p+1}^{q-1}\tilde{V}_{p',(i+1)/2}^{(q)}\subset V_{p,i}^{(q)}.
   \end{align*}
  \item If $i=2$, then $P(q-1,q,1)$ shows that
   \begin{align*}
    z^{-(q-p-1)}L_{i/2}'
    &=z^{-(q-p-1)}L_1'
    =z^{-(q-p-1)}(1-z)^{-1}\\
    &=(1-z)^{-1}+\sum_{r=1}^{q-p-1}z^{-r}\\
    &\in\tilde{V}_{q-1,1}^{(q)}+\langle z^{-r}\mid1\le r\le q-p-1\rangle\subset V_{p,2}^{(q)}=V_{p,i}^{(q)}.
   \end{align*}
   If $i/2$ is even, then $1\le i/2<i$ and so $P(p,q,i/2)$ shows that
   \begin{align*}
    z^{-(q-p-1)}L_{i/2}'
    =z^{-(q-p)}L_{i/4}
    \in\tilde{V}_{p,i/2}^{(q)}\subset V_{p,i}^{(q)}.
   \end{align*}
   If $i/2$ is odd and $i\ge6$, then $1\le(i-2)/2<i/2<i$ and so $P(q-1,q,i/2)$ and $P(p',q,(i-2)/2)$ for $p'\in\{p+1,\dots,q-1\}$ show that
   \begin{align*}
    z^{-(q-p-1)}L_{i/2}'
    &=z^{-(q-p-1)}(1-z)^{-1}L_{(i-2)/4}\\
    &=\Biggl((1-z)^{-1}+\sum_{r=1}^{q-p-1}z^{-r}\Biggr)L_{(i-2)/4}\\
    &\in\tilde{V}_{q-1,i/2}^{(q)}+\sum_{p'=p+1}^{q-1}\tilde{V}_{p',(i-2)/2}^{(q)}\subset V_{p,i}^{(q)}.
   \end{align*}
  \item If $i=1$, then $(q-1)+q<p+q$ and so $P(q-1,q,1)$ shows that
   \begin{align*}
    z^{p-q+1}L_i'
    &=z^{p-q+1}L_1'=z^{p-q+1}(1-z)^{-1}
    =(1-z)^{-1}-\sum_{r=0}^{p-q}z^r\\
    &\in\tilde{V}_{q-1,1}^{(q)}+\langle z^r\mid0\le r\le p-q\rangle\subset V_{p,1}^{(q)}=V_{p,i}^{(q)}.
   \end{align*}
   If $i$ is even, then $1\le i/2<i$ and so $P(p,q,i/2)$ shows that
   \[
    z^{p-q+1}L_i'=z^{p-q}L_{i/2}\in\tilde{V}_{p,i/2}^{(q)}\subset V_{p,i}^{(q)}.
   \]
   If $i$ is odd and $i\ge3$, then $(q-1)+q<p+q$ and $1\le(i-1)/2<i$; therefore $P(q-1,q,i)$ and $P(p',q,(i-1)/2)$ for $p'\in\{q,\dots,p\}$ show that
   \begin{align*}
    z^{p-q+1}L_i'
    &=z^{p-q+1}(1-z)^{-1}L_{(i-1)/2}\\
    &=\Biggl((1-z)^{-1}-\sum_{r=0}^{p-q}z^r\Biggr)L_{(i-1)/2}\\
    &\in\tilde{V}_{q-1,i}^{(q)}+\sum_{p'=q}^{p}\tilde{V}_{p',(i-1)/2}^{(q)}\subset V_{p,i}^{(q)}.\qedhere
   \end{align*}
 \end{enumerate}
\end{proof}

By induction on $(i,p+q)$ with respect to the lexicographic order,
Proposition~\ref{prop:Lpi(q)} reduces to the following lemma:
\begin{lem}
 Let $p,q\ge0$ and $i\ge1$.
 Suppose that $P(p',q',i')$ holds whenever $i'<i$ and that $P(p',q',i)$ holds whenever $p'+q'<p+q$.
 Then $P(p,q,i)$ holds.
\end{lem}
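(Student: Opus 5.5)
The proof splits according to the three regimes in Proposition~\ref{prop:Lpi(q)}, together with the base case $q=0$. If $q=0$, then $V_{p,i}^{(0)}$ is irrelevant: the third case asserts $L_{p,i}\equiv L_{p,i}$, which is trivial. So assume $q\ge1$. We write $L_{p,i}^{(q)}=(L_{p,i}^{(q-1)})'$ and apply the induction hypothesis $P(p,q-1,i)$, which is available since $p+(q-1)<p+q$. This gives $L_{p,i}^{(q-1)}\equiv g\pmod{V_{p,i}^{(q-1)}}$ for the explicit $g$ in the statement. By Lemma~\ref{lem:Vpiq_Vpi(q+1)}, differentiating yields $L_{p,i}^{(q)}\equiv g'\pmod{V_{p,i}^{(q)}}$. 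It therefore remains to compute $g'$ and show that it agrees with the claimed expression modulo $V_{p,i}^{(q)}$. Lemma~\ref{lem:induction_differentiation} is designed exactly to discard the unwanted terms.

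\emph{Case $p\ge q$.} Here $p\ge q-1$, so $g=\frac{p!}{(p-q+1)!}z^{p-q+1}L_i$, and
\[
g'=\frac{p!}{(p-q)!}z^{p-q}L_i+\frac{p!}{(p-q+1)!}z^{p-q+1}L_i'.
\]
Since $p>q-1$ and $q\ge1$, Lemma~\ref{lem:induction_differentiation}(3) puts the second term in $V_{p,i}^{(q)}$. The first term is $\frac{p!}{(p-q)!}L_{p-q,i}$, as claimed.

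\emph{Case $p=q-1$.} Now $g=p!\,L_{0,i}=p!\,L_i$, and $g'=p!\,L_i'$. For $i$ odd this equals $p!\,(1-z)^{-1}L_{(i-1)/2}$. For $i$ even it equals $p!\,z^{-1}L_{i/2}$. Since $q-p=1$ and $(q-p-1)!=0!=1$, both match the formula, with sign $(-1)^0=1$ in the even case.

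\emph{Case $p<q-1$.} Then $g$ comes from the first or second line of the proposition with $q-1$ in place of $q$.
\begin{itemize}
\item For $i$ odd, $g=p!(q-p-2)!(1-z)^{-(q-p-1)}L_{(i-1)/2}$. Its derivative is $p!(q-p-1)!(1-z)^{-(q-p)}L_{(i-1)/2}$ plus a multiple of $(1-z)^{-(q-p-1)}L_{(i-1)/2}'$, and Lemma~\ref{lem:induction_differentiation}(1) places the latter in $V_{p,i}^{(q)}$.
\item For $i$ even, $g=(-1)^{q-p-2}p!(q-p-2)!z^{-(q-p-1)}L_{i/2}$. Differentiating $z^{-(q-p-1)}$ contributes the factor $-(q-p-1)$, which yields the sign $(-1)^{q-p-1}$ and the factorial $(q-p-1)!$. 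The remaining term is a multiple of $z^{-(q-p-1)}L_{i/2}'$, which lies in $V_{p,i}^{(q)}$ by Lemma~\ref{lem:induction_differentiation}(2).
\end{itemize}

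The main obstacle is bookkeeping rather than ideas. First, one must check that the case boundaries line up: the case of $P(p,q-1,i)$ used must be the correct one, namely "$p\ge q-1$" for the cases $p\ge q$ and $p=q-1$, and "$p<q-1$" for the case $p<q-1$. Second, Lemma~\ref{lem:induction_differentiation} must be applied only under its exact hypotheses, which it is, since the induction hypotheses coincide with those in that lemma.
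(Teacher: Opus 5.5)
Your proof is correct and follows the paper's argument essentially verbatim. You reduce to $q\ge1$, invoke $P(p,q-1,i)$, and differentiate the congruence using Lemma~\ref{lem:Vpiq_Vpi(q+1)}. In the cases $p<q-1$ and $p>q-1$ you discard the extra derivative term via the matching part of Lemma~\ref{lem:induction_differentiation}, and you handle $p=q-1$ directly from the formula for $L_i'$, exactly as the paper does. You also write out the derivative computations that the paper leaves implicit (the factor $q-p-1$, the sign from differentiating $z^{-(q-p-1)}$, and the coefficient $p!/(p-q)!$), and these are all correct.
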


\begin{proof}
 Since $P(p,q,i)$ obviously holds if $q=0$, we may assume that $q\ge1$.
 Then $P(p,q-1,i)$ holds by assumption.

 If $p<q-1$ and $i$ is odd, then $P(p,q-1,i)$ means that
 \[
  L_{p,i}^{(q-1)}\equiv p!(q-p-2)!(1-z)^{-(q-p-1)}L_{(i-1)/2}\pmod{V_{p,i}^{(q-1)}}
 \]
 and so Lemmas~\ref{lem:Vpiq_Vpi(q+1)} and \ref{lem:induction_differentiation}~(1) imply $P(p,q,i)$.

 If $p<q-1$ and $i$ is even, then $P(p,q-1,i)$ means that
 \[
  L_{p,i}^{(q-1)}\equiv(-1)^{q-p-2}p!(q-p-2)!z^{-(q-p-1)}L_{i/2}\pmod{V_{p,i}^{(q-1)}}
 \]
 and so Lemmas~\ref{lem:Vpiq_Vpi(q+1)} and \ref{lem:induction_differentiation}~(2) imply $P(p,q,i)$.

 If $p>q-1$, then $P(p,q-1,i)$ means that
 \[
  L_{p,i}^{(q-1)}\equiv\frac{p!}{(p-q+1)!}L_{p-q+1,i}\pmod{V_{p,i}^{(q-1)}}
 \]
 and so Lemmas~\ref{lem:Vpiq_Vpi(q+1)} and \ref{lem:induction_differentiation}~(3) imply $P(p,q,i)$.

 If $p=q-1$, then $P(p,q-1,i)$ means that
 \[
  L_{p,i}^{(q-1)}\equiv p!L_i\pmod{V_{p,i}^{(q-1)}}
 \]
 and so Lemma~\ref{lem:Vpiq_Vpi(q+1)} shows that
 \[
  L_{p,i}^{(q)}\equiv
  \begin{cases}
   p!(1-z)^{-1}L_{(i-1)/2}&\text{if $i$ is odd};\\
   p!z^{-1}L_{i/2}&\text{if $i$ is even}
  \end{cases}
  \pmod{V_{p,i}^{(q)}},
 \]
 which means that $P(p,q,i)$ holds.
\end{proof}

This completes the proof of Proposition~\ref{prop:Lpi(q)}.

\subsection{Computation of \texorpdfstring{$D_N(\bm{L}_{w,m})$}{$D_N(L_{w,m})$}}
For $w,m\ge1$, we set
\begin{align*}
 \bm{L}_{w,m}
 &=(L_{0,1},\dots,L_{m-1,1},\dots,L_{0,2^w-1},\dots,L_{m-1,2^w-1})\\
 &=(L_{0,i},\dots,L_{m-1,i})_{1\le i\le2^w-1}\\
 &=((L_{p,i})_{0\le p\le m-1})_{1\le i\le2^w-1}\in\Q[[z]]^{(2^w-1)m},
\end{align*}
so that the left-hand side of Proposition~\ref{prop:=} is equal to $D_N(\bm{L}_{w,1})$.
We further set $\bm{L}_{0,m}=\emptyset$ for all $m\ge1$.

\begin{lem}\label{lem:main}
 Let $w,m\ge1$ and $N\ge-1$.
 Then we have
 \[
  D_N(\bm{L}_{w,m}^{(m)})=(-1)^{m(m-1)/2}(\spf_0(m-1))^{2^{w+1}-2}D_{N+2m}(\bm{L}_{w-1,2m}).
 \]
\end{lem}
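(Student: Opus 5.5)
The plan is to expand each column of $\bm{L}_{w,m}^{(m)}$ using Proposition~\ref{prop:Lpi(q)} with $q=m$. Every entry is $L_{p,i}^{(m)}$ with $0\le p\le m-1<m$, so only the first two cases of that proposition occur. Taking the columns in the order in which they appear in $\bm{L}_{w,m}$ (first by $i$, then by $p$), I will check that $V_{p,i}^{(m)}$ is spanned by two kinds of elements:
\begin{itemize}
\item earlier columns $L_{p',i'}^{(m)}$: these have $i'<i$ and $p'\le\max\{p,m-1\}=m-1$, or $i'=i$ and $p'<p$;
\item the monomials $z^{-r}$ with $1\le r\le m-p-1$.
\end{itemize}
There is no contribution from $\langle z^r\rangle$, since $p-q<0$. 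Because $N\ge-1$, every row of $D_N$ reads off a coefficient $[z^{N+k}]$ with $N+k\ge0$, and all such coefficients of $z^{-r}$ vanish. So column operations (adding multiples of earlier columns) replace each column by its leading term from Proposition~\ref{prop:Lpi(q)}.

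Next I pull out the constants. For each fixed $i$, $\prod_{p=0}^{m-1}p!(m-1-p)!=(\spf_0(m-1))^2$. Since $i$ takes $2^w-1$ values, this gives the factor $(\spf_0(m-1))^{2^{w+1}-2}$. It also leaves a sign $\prod(-1)^{m-p-1}$ coming from the even $i$, which I will record for the final sign check.

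After this, the columns are of two kinds. For odd $i=2j+1$ with $0\le j\le2^{w-1}-1$, the columns are $(1-z)^{-s}L_j$ for $s=1,\dots,m$. For even $i=2j$ with $1\le j\le2^{w-1}-1$, they are $z^{-s}L_j$ for $s=1,\dots,m$. The $j=0$ columns are exactly $(1-z)^{-m},\dots,(1-z)^{-1}$ (up to their order). I will permute them to the front and apply Lemma~\ref{lem:1-z}. This produces $D_{N+m}$ of the remaining columns multiplied by $(1-z)^m$, and it is here that the sign $(-1)^{m(m-1)/2}$ should originate.

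The remaining columns, for each $j\ge1$, are $(1-z)^{p}L_j$ for $0\le p<m$ and $(1-z)^mz^{-s}L_j$ for $1\le s\le m$. The identity $[z^{N+m+k}]f=[z^{N+2m+k}]z^mf$ gives $D_{N+m}(\bm{g})=D_{N+2m}(z^m\bm{g})$. After multiplying by $z^m$, the block for $j$ becomes $\{z^m(1-z)^p\}_{p<m}\cup\{z^t(1-z)^m\}_{t<m}$, times $L_j$. Because $z^m$ and $(1-z)^m$ are coprime, these $2m$ polynomials form a basis of the polynomials of degree $<2m$. The matrix expressing them in the monomial basis $1,z,\dots,z^{2m-1}$ is a Sylvester matrix, so its determinant is $\pm\operatorname{Res}(z^m,(1-z)^m)=\pm1$. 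Applying this change of basis blockwise converts the columns into $(z^sL_j)_{0\le s\le 2m-1,\,1\le j\le 2^{w-1}-1}=\bm{L}_{w-1,2m}$. This yields $D_{N+2m}(\bm{L}_{w-1,2m})$ up to sign. In the case $w=1$ there are no $j\ge1$ blocks, and the empty determinant $D_{N+2m}(\emptyset)=1$ is consistent with this.

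The main obstacle is the sign bookkeeping. It collects contributions from the reordering of the $j=0$ columns before applying Lemma~\ref{lem:1-z}, from the signs $(-1)^{m-p-1}$ attached to the even-$i$ columns, from interleaving the odd and even columns within each $j$-block into the order of $\bm{L}_{w-1,2m}$, and from the sign of the Sylvester determinant. I would first compute the block determinant for $m=1,2$ explicitly. Then I would show that, within each $j\ge1$ block, the product of the $(-1)^{m-p-1}$ signs, the permutation sign, and the resultant sign equals $+1$, so that only the $(-1)^{m(m-1)/2}$ from Lemma~\ref{lem:1-z} survives.
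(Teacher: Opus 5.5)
Your outline follows the paper's proof up to the last step. The column reduction via Proposition~\ref{prop:Lpi(q)} matches, using that $V_{p,i}^{(m)}$ is spanned by earlier columns and by $z^{-r}$, which $D_N$ cannot see for $N\ge-1$. So do the extraction of $(\spf_0(m-1))^{2^{w+1}-2}$, the use of Lemma~\ref{lem:1-z}, and the shift $D_{N+m}(\bm{g})=D_{N+2m}(z^m\bm{g})$. The gap is that you never determine the sign. The sign is part of the statement: it is what propagates into the factor $(-1)^{1-\delta_{w,1}}$ of Theorem~\ref{thm:<}. Your resultant argument cannot supply it as stated. Let $A$ be the block matrix whose columns are the coefficient vectors of $z^p(1-z)^m$ and $z^m(1-z)^p$ for $0\le p\le m-1$. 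It is not literally a Sylvester matrix of $z^m$ and $(1-z)^m$, which would need $z^{m+t}$ in place of $z^m(1-z)^p$. In any case $\mathrm{Res}(z^m,(1-z)^m)=1$ only gives $\det A=\pm1$. Your last paragraph states the needed sign identity but does not prove it.

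The missing computation is short if you do it directly, as the paper does. Keep the natural column order. The $i=1$ columns already come first, in the order $(1-z)^{-m},\dots,(1-z)^{-1}$ that Lemma~\ref{lem:1-z} requires, so no permutation sign arises there. Within the block for $j$, the columns from $i=2j$ precede those from $i=2j+1$. So the block is $(z^p(1-z)^mL_j)_{0\le p\le m-1}$ followed by $(z^m(1-z)^pL_j)_{0\le p\le m-1}$.

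In the basis $1,z,\dots,z^{2m-1}$, the polynomials $z^m(1-z)^p$ have no components on $1,\dots,z^{m-1}$, so $A$ is block lower triangular. The diagonal block formed by the coefficients of $z^p(1-z)^m$ on $1,\dots,z^{m-1}$ is unitriangular. The diagonal block formed by the coefficients of $z^m(1-z)^p$ on $z^m,\dots,z^{2m-1}$ is triangular with diagonal entries $(-1)^p$. Hence $\det A=(-1)^{m(m-1)/2}$. Equivalently, column operations turn the block into $(z^pL_j)_p$ followed by $((-1)^pz^{m+p}L_j)_p$, which is what the paper does.

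This cancels the factor $\prod_{p=0}^{m-1}(-1)^{m-p-1}=(-1)^{m(m-1)/2}$ from the even column $i=2j$ of the same block. So each of the $2^{w-1}-1$ blocks contributes $+1$, and only the $(-1)^{m(m-1)/2}$ from Lemma~\ref{lem:1-z} survives. With this step filled in, your argument proves the lemma, and the resultant is no longer needed.
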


\begin{proof}
 Notice that for $(p,i)\in\{0,\dots,m-1\}\times\{1,\dots,2^w-1\}$, we have
 \begin{align*}
  V_{p,i}^{(m)}
  &=\langle L_{p',i'}^{(m)}\mid0\le p'\le m-1,\;1\le i'<i\rangle+\langle L_{p',i}^{(m)}\mid0\le p'<p\rangle\\
  &\hphantom{{}={}}+\langle z^{-r}\mid1\le r\le m-p-1\rangle;
 \end{align*}
 the first and second summands on the right-hand side are contained in the $\Q$-linear span of all the entries preceding $L_{p,i}^{(m)}$ in the sequence
 $\bm{L}_{w,m}^{(m)}$.
 Notice also that since $N\ge-1$, the value of $D_N$ depends only on the coefficients of nonnegative powers of $z$ in its arguments.
 Therefore, by elementary column operations from the last column backwards, it follows from Proposition~\ref{prop:Lpi(q)} that
 \[
  D_N(\bm{L}_{w,m}^{(m)})=D_N(\bm{L}_1,\dots,\bm{L}_{2^w-1}),
 \]
 where
 \[
  \bm{L}_i=
   \begin{cases}
    (p!(m-p-1)!(1-z)^{-(m-p)}L_{(i-1)/2})_{0\le p\le m-1}&\text{if $i$ is odd};\\
    ((-1)^{m-p-1}p!(m-p-1)!z^{-(m-p)}L_{i/2})_{0\le p\le m-1}&\text{if $i$ is even}.
   \end{cases}
 \]

 We therefore have
 \begin{align*}
  &D_N(\bm{L}_{w,m}^{(m)})\\
  &=\Biggl(\prod_{p=0}^{m-1}p!(m-p-1)!\Biggr)^{2^{w-1}}\Biggl(\prod_{p=0}^{m-1}(-1)^{m-p-1}p!(m-p-1)!\Biggr)^{2^{w-1}-1}\\
  &\hphantom{{}={}}\times D_N(\tilde{\bm{L}}_1,\dots,\tilde{\bm{L}}_{2^w-1})\\
  &=(-1)^{m(m-1)(2^{w-1}-1)/2}(\spf_0(m-1))^{2^{w+1}-2}D_N(\tilde{\bm{L}}_1,\dots,\tilde{\bm{L}}_{2^w-1}),
 \end{align*}
 where
 \[
  \tilde{\bm{L}}_i=
   \begin{cases}
    ((1-z)^{-(m-p)}L_{(i-1)/2})_{0\le p\le m-1}&\text{if $i$ is odd};\\
    (z^{-(m-p)}L_{i/2})_{0\le p\le m-1}&\text{if $i$ is even}.
   \end{cases}
 \]

 Since
 \[
  \tilde{\bm{L}}_1=((1-z)^{-(m-p)})_{0\le p\le m-1}=((1-z)^{-m},\dots,(1-z)^{-1}),
 \]
 Lemma~\ref{lem:1-z} shows that
 \begin{align*}
  &D_N(\tilde{\bm{L}}_1,\dots,\tilde{\bm{L}}_{2^w-1})\\
  &=(-1)^{m(m-1)/2}D_{N+m}((1-z)^m\tilde{\bm{L}}_2,\dots,(1-z)^m\tilde{\bm{L}}_{2^w-1}).
 \end{align*}
 Here we have
 \begin{align*}
  &D_{N+m}((1-z)^m\tilde{\bm{L}}_2,\dots,(1-z)^m\tilde{\bm{L}}_{2^w-1})\\
  &=D_{N+2m}(z^m(1-z)^m\tilde{\bm{L}}_2,\dots,z^m(1-z)^m\tilde{\bm{L}}_{2^w-1})\\
  &=D_{N+2m}(\bm{K}_1,\dots,\bm{K}_{2^{w-1}-1})
 \end{align*}
 where
 \begin{align*}
  \bm{K}_i&=(z^m(1-z)^m\tilde{\bm{L}}_{2i},z^m(1-z)^m\tilde{\bm{L}}_{2i+1})\\
  &=((z^p(1-z)^mL_i)_{0\le p\le m-1},(z^m(1-z)^pL_i)_{0\le p\le m-1})\in\Q[[z]]^{2m}.
 \end{align*}
 Performing elementary column operations gives
 \begin{align*}
  &D_{N+2m}(\bm{K}_1,\dots,\bm{K}_{2^{w-1}-1})\\
  &=D_{N+2m}(((z^p(1-z)^mL_i)_{0\le p\le m-1},(z^m(1-z)^pL_i)_{0\le p\le m-1})_{1\le i\le 2^{w-1}-1})\\
  &=D_{N+2m}(((z^p(1-z)^mL_i)_{0\le p\le m-1},((-1)^pz^{m+p}L_i)_{0\le p\le m-1})_{1\le i\le 2^{w-1}-1})\\
  &=D_{N+2m}(((z^pL_i)_{0\le p\le m-1},((-1)^pz^{m+p}L_i)_{0\le p\le m-1})_{1\le i\le 2^{w-1}-1})\\
  &=(-1)^{m(m-1)(2^{w-1}-1)/2}D_{N+2m}(\bm{L}_{w-1,2m}).
 \end{align*}

 Hence we conclude that
 \begin{align*}
  &D_N(\bm{L}_{w,m}^{(m)})\\
  &=(-1)^{m(m-1)(2^{w-1}-1)/2}(\spf_0(m-1))^{2^{w+1}-2}(-1)^{m(m-1)/2}\\
  &\hphantom{{}={}}\times(-1)^{m(m-1)(2^{w-1}-1)/2}D_{N+2m}(\bm{L}_{w-1,2m})\\
  &=(-1)^{m(m-1)/2}(\spf_0(m-1))^{2^{w+1}-2}D_{N+2m}(\bm{L}_{w-1,2m}),
 \end{align*}
 as desired.
\end{proof}

\begin{lem}\label{lem:recursion_L_w,m}
 Let $w,m\ge1$ and $N\ge m-1$.
 Then we have
 \[
  D_N(\bm{L}_{w,m})=(-1)^{m(m-1)/2}\frac{(\spf_0(m-1))^{2^{w+1}-2}\spf_{N-m}(m)}{\spf_{N+(2^w-2)m}(m)}D_{N+m}(\bm{L}_{w-1,2m}).
 \]
\end{lem}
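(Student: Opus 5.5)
The plan is to combine Lemma~\ref{lem:diff} with Lemma~\ref{lem:main}. The sequence $\bm{L}_{w,m}$ has length $l=(2^w-1)m$. Since $N\ge m-1$, Lemma~\ref{lem:diff} applies with this $m$ and gives
\[
 D_N(\bm{L}_{w,m})=\frac{\spf_{N-m}(m)}{\spf_{N+(2^w-1)m-m}(m)}D_{N-m}(\bm{L}_{w,m}^{(m)})
 =\frac{\spf_{N-m}(m)}{\spf_{N+(2^w-2)m}(m)}D_{N-m}(\bm{L}_{w,m}^{(m)}).
\]
This already produces exactly the ratio of superfactorial-type products appearing in the statement.

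Next I apply Lemma~\ref{lem:main} with $N$ replaced by $N-m$. This is permitted because $N-m\ge -1$ follows from $N\ge m-1$. It gives
\[
 D_{N-m}(\bm{L}_{w,m}^{(m)})=(-1)^{m(m-1)/2}(\spf_0(m-1))^{2^{w+1}-2}D_{N-m+2m}(\bm{L}_{w-1,2m}),
\]
and $N-m+2m=N+m$, which is the index required in the statement. Substituting this into the first display yields the claimed formula.

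There is no real obstacle here; the only points needing care are the hypotheses. First, the lower bound $N\ge m-1$ must serve two purposes at once: it is what Lemma~\ref{lem:diff} requires, and it also guarantees $N-m\ge-1$ for Lemma~\ref{lem:main}. Second, the length $l=(2^w-1)m$ has to be correctly inserted into $\spf_{N+l-m}(m)$. The case $w=1$ needs no separate treatment, since then $\bm{L}_{0,2m}=\emptyset$ and $D_{N+m}(\emptyset)=1$ by convention, consistently with Lemma~\ref{lem:main}.
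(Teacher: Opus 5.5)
Your proposal is correct and is exactly the paper's argument: apply Lemma~\ref{lem:diff} with $l=(2^w-1)m$, then Lemma~\ref{lem:main} at $N-m\ge-1$. Your checks of the hypothesis $N\ge m-1$, of the index $N+(2^w-2)m$, and of the case $w=1$ are all accurate.
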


\begin{proof}
 Lemmas~\ref{lem:diff} and \ref{lem:main} show that
 \begin{align*}
  D_N(\bm{L}_{w,m})&=\frac{\spf_{N-m}(m)}{\spf_{N+(2^w-2)m}(m)}D_{N-m}(\bm{L}_{w,m}^{(m)})\\
  &=(-1)^{m(m-1)/2}\frac{(\spf_0(m-1))^{2^{w+1}-2}\spf_{N-m}(m)}{\spf_{N+(2^w-2)m}(m)}D_{N+m}(\bm{L}_{w-1,2m}),
 \end{align*}
 as desired.
\end{proof}

\begin{rem}\label{rem:D_N(L_1,m)}
 Since $D_N(\emptyset)=1$, applying Lemma~\ref{lem:recursion_L_w,m} to $w=1$ gives
 \begin{align*}
  D_N(\bm{L}_{1,m})
  &=D_N(L_{0,1},\dots,L_{m-1,1})=\begin{vmatrix}\frac{1}{N+1}&\dots&\frac{1}{N-m+2}\\\vdots&\ddots&\vdots\\\frac{1}{N+m}&\dots&\frac{1}{N+1}\end{vmatrix}\\
  &=(-1)^{m(m-1)/2}\frac{(\spf_0(m-1))^2\spf_{N-m}(m)}{\spf_{N}(m)}.
 \end{align*}
 This identity also follows from the formula for the Cauchy determinant
 \[
  \det\biggl(\frac{1}{x_i-y_j}\biggr)_{1\le i,j\le m}=\frac{\prod_{1\le i<j\le m}(x_j-x_i)(y_i-y_j)}{\prod_{i,j=1}^{m}(x_i-y_j)}
 \]
 by setting $x_i=N+i+1$ and $y_i=i$:
 \begin{align*}
  D_N(\bm{L}_{1,m})
  &=\begin{vmatrix}\frac{1}{N+1}&\dots&\frac{1}{N-m+2}\\\vdots&\ddots&\vdots\\\frac{1}{N+m}&\dots&\frac{1}{N+1}\end{vmatrix}\\
  &=\frac{\prod_{1\le i<j\le m}((N+j+1)-(N+i+1))(i-j)}{\prod_{i,j=1}^{m}(N+i+1-j)}\\
  &=(-1)^{m(m-1)/2}\frac{\prod_{1\le i<j\le m}(j-i)^2}{\prod_{i=1}^{m}((N+i)!/(N+i-m)!)}\\
  &=(-1)^{m(m-1)/2}\frac{(\spf_0(m-1))^2\spf_{N-m}(m)}{\spf_{N}(m)}.
 \end{align*}
\end{rem}

\begin{thm}\label{thm:D_N(L_w,m)}
 For $w,m\ge1$ and $N\ge m-1$, we have
 \begin{align*}
  D_N(\bm{L}_{w,m})
  &=(-1)^{m(m+1)/2-m\delta_{w,1}}\\
  &\hphantom{{}={}}\times\prod_{j=0}^{w-1}\prod_{i=\lfloor2^{j-1}\rfloor m}^{2^jm-1}\frac{\bigl(i!\bigr)^{2^{w-j+2}-2(w-j+2)}((N-m+1+i)!)^{w-j}}{((N+(2^w-1)m-i)!)^{w-j}}.
 \end{align*}
\end{thm}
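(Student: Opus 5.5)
The plan is to prove the formula by induction on $w$, keeping $m$ and $N$ arbitrary (subject to $N\ge m-1$), and using Lemma~\ref{lem:recursion_L_w,m} as the recursive step. For $w=1$, Remark~\ref{rem:D_N(L_1,m)} already gives
\[
D_N(\bm{L}_{1,m})=(-1)^{m(m-1)/2}\frac{(\spf_0(m-1))^2\spf_{N-m}(m)}{\spf_N(m)}.
\]
I will check that this matches the claimed formula. The sign there is $(-1)^{m(m+1)/2-m}=(-1)^{m(m-1)/2}$. The product has only $j=0$, with $i$ ranging over $0,\dots,m-1$ and exponent $2^3-2\cdot3=2$ on $i!$, so that factor is $(\spf_0(m-1))^2$. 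The remaining factors are $\prod_{i=0}^{m-1}(N-m+1+i)!=\spf_{N-m}(m)$ and $\prod_{i=0}^{m-1}(N+m-i)!=\spf_N(m)$.

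For the inductive step, let $w\ge2$ and apply Lemma~\ref{lem:recursion_L_w,m}, which reduces $D_N(\bm{L}_{w,m})$ to $D_{N+m}(\bm{L}_{w-1,2m})$. The new parameters $N'=N+m$ and $m'=2m$ satisfy $N'\ge m'-1$, because $N\ge m-1$. So the induction hypothesis applies, and after substituting $N'-m'+1=N-m+1$ and $N'+(2^{w-1}-1)m'=N+(2^w-1)m$ it gives
\[
(-1)^{m(2m+1)}\prod_{j=0}^{w-2}\prod_{i=\lfloor2^{j-1}\rfloor 2m}^{2^j2m-1}\frac{(i!)^{2^{w-j+1}-2(w-j+1)}((N-m+1+i)!)^{w-1-j}}{((N+(2^w-1)m-i)!)^{w-1-j}},
\]
where $\delta_{w-1,1}$ does not appear because I will treat $w-1=1$ separately for the sign. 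Reindexing $j\mapsto j-1$ turns the ranges into $i\in[\lfloor2^{j-1}\rfloor m,2^jm-1]$ for $j\ge2$, and $i\in[0,2m-1]$ for $j=1$. The exponents become $2^{w-j+2}-2(w-j+2)$ and $w-j$, which exactly match the target for $j\ge1$, except that the $j=1$ block now covers $i\in[0,2m-1]$ while the target's $j=1$ block covers only $[m,2m-1]$. The extra part $i\in[0,m-1]$, carrying exponents $2^{w+1}-2(w+1)$ and $w-1$, together with the prefactor from Lemma~\ref{lem:recursion_L_w,m}, must combine into the target's $j=0$ block, which has exponents $2^{w+2}-2(w+2)$ and $w$.

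This merging is the key bookkeeping step, and I will do it in three parts. First, the prefactor contributes $(\spf_0(m-1))^{2^{w+1}-2}=\prod_{i=0}^{m-1}(i!)^{2^{w+1}-2}$, and adding exponents gives $2^{w+1}-2(w+1)+2^{w+1}-2=2^{w+2}-2(w+2)$, as required. Second, $\spf_{N-m}(m)=\prod_{i=0}^{m-1}(N-m+1+i)!$ raises the exponent $w-1$ to $w$. Third, $\spf_{N+(2^w-2)m}(m)=\prod_{i=0}^{m-1}(N+(2^w-1)m-i)!$, where I reindex via $t=m-1-i$ to get $\prod_{i}(N+(2^w-2)m+1+i)!$; as a set this is the same as $\{(N+(2^w-1)m-i)!:0\le i\le m-1\}$. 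It therefore raises the denominator exponent $w-1$ to $w$ as well.

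The signs are the likely fiddly point. The total sign is $(-1)^{m(m-1)/2}$ from the lemma times the hypothesis sign $(-1)^{m'(m'+1)/2-m'\delta_{w-1,1}}$ with $m'=2m$. Since $m(2m+1)\equiv m$ and $2m\delta$ is even modulo $2$, the hypothesis sign is $(-1)^{m}$. So the total is $(-1)^{m(m-1)/2+m}=(-1)^{m(m+1)/2}$, which matches the target sign $(-1)^{m(m+1)/2-m\delta_{w,1}}$ for $w\ge2$.
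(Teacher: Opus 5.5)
Your proposal is correct and follows the paper's proof essentially step for step: induction on $w$, the base case from Remark~\ref{rem:D_N(L_1,m)}, the inductive step via Lemma~\ref{lem:recursion_L_w,m} with the hypothesis applied at $(N+m,2m)$ (valid since $N+m\ge 2m-1$), and the prefactor merged with the $i\in[0,m-1]$ part of the reindexed $j=1$ block to form the $j=0$ block. Your exponent bookkeeping is right, and so is your parity argument for the sign: the hypothesis contributes $(-1)^m$, giving $(-1)^{m(m-1)/2+m}=(-1)^{m(m+1)/2}$.
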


\begin{proof}
 We prove by induction on $w$ that the formula holds for all $m\ge1$ and all $N\ge m-1$.
 For $w=1$, the formula follows from Lemma~\ref{lem:recursion_L_w,m}, as explained in Remark~\ref{rem:D_N(L_1,m)}.
 Let $w\ge2$ and suppose that the formula holds for $w-1$ for all $m\ge1$ and all $N\ge m-1$.
 Since $N+m\ge2m-1$, Lemma~\ref{lem:recursion_L_w,m} and the induction hypothesis show that
 \begin{align*}
  &D_N(\bm{L}_{w,m})\\
  &=(-1)^{m(m-1)/2}\frac{(\spf_0(m-1))^{2^{w+1}-2}\spf_{N-m}(m)}{\spf_{N+(2^w-2)m}(m)}\\
  &\hphantom{{}={}}\times(-1)^{2m(2m+1)/2-2m\delta_{w-1,1}}\\
  &\hphantom{{}={}}\times\prod_{j=0}^{w-2}\prod_{i=\lfloor2^{j-1}\rfloor2m}^{2^{j+1}m-1}\frac{(i!)^{2^{w-j+1}-2(w-j+1)}((N-m+1+i)!)^{w-j-1}}{((N+m+(2^{w-1}-1)2m-i)!)^{w-j-1}}\\
  &=(-1)^{m(m+1)/2}\prod_{i=0}^{m-1}\frac{(i!)^{2^{w+1}-2}(N-m+1+i)!}{(N+(2^w-1)m-i)!}\\
  &\hphantom{{}={}}\times\prod_{i=0}^{2m-1}\frac{(i!)^{2^{w+1}-2(w+1)}((N-m+1+i)!)^{w-1}}{((N+(2^w-1)m-i)!)^{w-1}}\\
  &\hphantom{{}={}}\times\prod_{j=1}^{w-2}\prod_{i=\lfloor2^{j-1}\rfloor2m}^{2^{j+1}m-1}\frac{(i!)^{2^{w-j+1}-2(w-j+1)}((N-m+1+i)!)^{w-j-1}}{((N+(2^w-1)m-i)!)^{w-j-1}}\\
  &=(-1)^{m(m+1)/2}\prod_{i=0}^{m-1}\frac{(i!)^{2^{w+2}-2(w+2)}((N-m+1+i)!)^w}{((N+(2^w-1)m-i)!)^w}\\
  &\hphantom{{}={}}\times\prod_{i=m}^{2m-1}\frac{(i!)^{2^{w+1}-2(w+1)}((N-m+1+i)!)^{w-1}}{((N+(2^w-1)m-i)!)^{w-1}}\\
  &\hphantom{{}={}}\times\prod_{j=2}^{w-1}\prod_{i=2^{j-1}m}^{2^jm-1}\frac{(i!)^{2^{w-j+2}-2(w-j+2)}((N-m+1+i)!)^{w-j}}{((N+(2^w-1)m-i)!)^{w-j}}\\
  &=(-1)^{m(m+1)/2}\prod_{j=0}^{w-1}\prod_{i=\lfloor2^{j-1}\rfloor m}^{2^jm-1}\frac{(i!)^{2^{w-j+2}-2(w-j+2)}((N-m+1+i)!)^{w-j}}{((N+(2^w-1)m-i)!)^{w-j}},
 \end{align*}
 as required.
\end{proof}

\begin{proof}[Proof of Proposition~\ref{prop:=}]
 Set $m=1$ in Theorem~\ref{thm:D_N(L_w,m)} to find the value of $D_N(\bm{L}_{w,1})$.
\end{proof}

\section*{Acknowledgements}
This work was supported by JSPS KAKENHI Grant Numbers JP22K13897, JP22K03244, and JP23K03072.

The authors used ChatGPT (OpenAI) and Claude (Anthropic) to proofread the manuscript and to improve its exposition.
All suggestions were reviewed by the authors, who take full responsibility for the content of this paper.
The third author thanks OpenAI for providing complimentary access to a ChatGPT workspace through the ChatGPT for Academic Researchers programme.
The second and third authors gratefully acknowledge Anthropic for providing complimentary access to Claude through the Claude Team plan for scientists.

\end{document}